 \newtheorem{theorem}{Theorem}
\newtheorem{remark}{{\em Remark}}
\newtheorem{example}{{\em Example}}
\def\E{\mathbb{E}}
\def\P{\mathbb{P}}
\def\R{\mathbb{R}}
\def\<{\langle}
\def\>{\rangle}
\def\S{\mathcal{S}}
\def\w{\wedge}
\newcommand{\x}{{\bf x}}
\newcommand{\y}{{\bf y}}
\newcommand{\n}{{\bf n}}
\def\L{\mathcal{L}}
\def\M{\mathcal{M}}
\newcommand{\X}{{\bf X}}
\newcommand{\W}{{\bf W}}
\newcommand{\bbb}{{\bf b}}
\newcommand{\Markov}[2]{\underset{#1}{\overset{#2}{\rightleftharpoons}}}
\title[BVPS FOR STATISTICS OF DIFFUSION IN A RANDOM ENVIRONMENT]{Boundary value problems for statistics of diffusion in a randomly switching environment: PDE and SDE perspectives}
\author{Sean D. Lawley}
\thanks{Department of Mathematics, University of Utah, Salt Lake City, UT 84112 USA (lawley@math.utah.edu). The author was supported by NSF grant DMS-RTG 1148230.}
\begin{document}

\maketitle

\begin{abstract}
Driven by diverse applications, several recent models impose randomly switching boundary conditions on either a PDE or SDE. The purpose of this paper is to provide tools for calculating statistics of these models and to establish a connection between these two perspectives on diffusion in a random environment. Under general conditions, we prove that the moments of a solution to a randomly switching PDE satisfy a hierarchy of BVPs with lower order moments coupling to higher order moments at the boundaries. Further, we prove that joint exit statistics for a set of particles following a randomly switching SDE satisfy a corresponding hierarchy of BVPs. In particular, the $M$-th moment of a solution to a switching PDE corresponds to exit statistics for $M$ particles following a switching SDE. We note that though the particles are non-interacting, they are nonetheless correlated because they all follow the same switching SDE. Finally, we give several examples of how our theorems reveal the sometimes surprising dynamics of these systems.
\end{abstract}

\smallskip
\noindent \textbf{keywords.} random PDE, stochastic hybrid system, piecewise deterministic Markov process, randomly switching boundary, switched dynamical system.
\smallskip

{\noindent \bf AMS subject classifications.} 35R60, 60J60, 60K37, 92C40.

\section{Introduction}\label{intro}

Several recent models impose randomly switching boundary conditions on either a partial differential equation (PDE) \cite{Bressloff15a, Lawley15neural1, Lawley15robin, Lawley15sima} or stochastic differential equation (SDE) \cite{Ammari11, Benichou11, Bressloff15b, Bressloff15c, Reingruber09, Reingruber10}. These models appear in a diverse set of fields, including neuroscience, insect physiology, medicine, biochemistry, intermittent search, and in the derivation of classical objects in dynamical systems. The PDE models arise from considering a \emph{density} of particles diffusing in a random environment, whereas the SDE models arise from considering only \emph{finitely many} particles diffusing in a random environment.

The purpose of this paper is to provide tools for calculating statistics of these models and to establish a connection between these two perspectives on diffusion in a random environment. We also give several examples to show how our tools elucidate the dynamics of these stochastic systems. In section~\ref{section partial} we consider evolution PDEs of the form
\begin{align*}
\partial_{t}u=\L u,
\end{align*}
and let both the boundary conditions and the differential operator $\L$ randomly switch according to a continuous-time Markov jump process. Under certain regularity assumptions on the resulting $L^{2}$-valued piecewise deterministic Markov process $\{u(\x,t)\}_{t\ge0}$, we prove that the moments of the process satisfy a hierarchy of boundary value problems (BVPs) with lower order moments coupling to higher order moments at the boundaries.

In section~\ref{section particle} we consider a set of particles diffusing in a bounded domain and allow both (a) the SDE governing their motion to randomly switch and (b) pieces of the boundary to switch between being either absorbing or reflecting. We note that though the particles are non-interacting, they are nonetheless correlated because they all follow the same switching SDE and boundary conditions. We prove that joint exit statistics of the particles (survival probabilities, mean first passage times, hitting probabilities) satisfy a hierarchy of BVPs that is very similar to the one for moments of solutions to switching PDEs. In particular, the $M$-th moment of a solution to a switching PDE corresponds to exit statistics for $M$ particles following a switching SDE.


We now comment on how this paper relates to recent work on similar systems. To our knowledge, \cite{Lawley15sima} and \cite{Bressloff15a} are the only other works that develop tools to analyze PDEs with randomly switching boundary conditions. The machinery of \cite{Lawley15sima} has the advantage that it does not require the switching to be Markovian, but our present results allow for much simpler calculation of statistics if the switching is Markovian. The BVPs of Theorem~\ref{general m} are derived in \cite{Bressloff15a} for a specific example by discretizing space and constructing the Chapman-Kolmogorov equation for the resulting finite-dimensional stochastic hybrid system. We generalize this result by using different techniques. Exit statistics for a single diffusing particle in a domain with switching boundary conditions are computed in \cite{Ammari11,  Bressloff15b, Bressloff15c, Reingruber09, Reingruber10}. Hitting probabilities for multiple particles are derived in \cite{Bressloff15a} for a specific example. To our knowledge, the present work is the first to give tools to compute joint exit statistics such as hitting probabilities, mean first passage times, and survival probabilities for multiple particles in general systems. Finally, the connections that we establish between switching PDEs and switching SDEs further develops the connections between classical potential theory and Brownian motion, first investigated over 60 years ago by Kakutani, Kac, and Doob \cite{doob54, kac51, kakutani44}.


In what follows, section~\ref{section partial} focuses on switching PDEs and section~\ref{section particle} considers switching SDEs. In addition to general theorems, both sections contain examples to (a) illustrate the biological applications that prompted this work and (b) show how the theorems reveal the dynamics of the stochastic systems (which are sometimes surprising). We conclude with a brief discussion and an appendix that collects some technical points.

\section{PDE perspective}\label{section partial}

\subsection{Hierarchy of moment equations}
In this section, we prove a theorem that gives the BVPs satisfied by the moments of the solution to a randomly switching PDE. Let $U\subset\R^{d}$ be an open set which will serve as the domain for our switching PDE (the regularity of $U$ will be handled by Assumptions \ref{continuous space}-\ref{neumann bound} below). For each $i$ in some finite set $I$, suppose we are given a differential operator of the form
\begin{align}\label{L}
\L_{i}u(\x) & = \sum_{j=1}^{d}(\bbb_{i})_{j}(\x)\partial_{x_{j}}u+\frac12\sum_{l,j=1}^{d}(\sigma_{i}\sigma_{i}^{T})_{l,j}(\x)\partial_{x_{l},x_{j}}u+s_{i}(\x)u,
\end{align}
where $\bbb_{i}:\bar{U}\mapsto\R^{d}$, $\sigma_{i}:\bar{U}\mapsto\R^{d\times d}$, $s_{i}:\bar{U}\mapsto\R$, and $x_{j}$ denotes the $j$-th component of $\x\in\R^{d}$. The three terms in (\ref{L}) respectively describe advection, diffusion, and any sinks or sources. In order to describe boundary conditions, suppose that for each $i\in I$, the pairwise disjoint sets $\Gamma_{i}^{\text{Dir}}$, $\Gamma_{i}^{\text{Neu}}$, and $\Gamma_{i}^{\text{Rob}}$ partition the boundary
\begin{align*}
\Gamma_{i}^{\text{Dir}}\cup\Gamma_{i}^{\text{Neu}}\cup\Gamma_{i}^{\text{Rob}}=\partial U,
\end{align*}
and we are given functions
\begin{align}
\begin{split}\label{BC functions}
g^{\text{Dir}}_{i}:\Gamma_{i}^{\text{Dir}}\to\R,
\qquad
g^{\text{Neu}}_{i}:\Gamma_{i}^{\text{Neu}}\to\R,\\
\text{and}\qquad
g^{\text{Rob}}_{i},h^{\text{Rob}}_{i}:\Gamma_{i}^{\text{Rob}}\to\R,\;f^{\text{Rob}}_{i}:\Gamma_{i}^{\text{Rob}}\to\partial U.
\end{split}
\end{align}
Dirichlet, Neumann, and Robin conditions will be imposed respectively on $\Gamma_{i}^{\text{Dir}}$, $\Gamma_{i}^{\text{Neu}}$, $\Gamma_{i}^{\text{Rob}}$ using the corresponding superscripted functions in (\ref{BC functions}). We note that $f_{i}^{\text{Rob}}$ is needed to describe non-local boundary conditions (see Example~\ref{thermostat}).

To describe the random switching, let $\{J(t)\}_{t\ge0}$ be a continuous time Markov jump process on $I$ with generator $Q$. The generator $Q$ is an $I\times I$ matrix with nonnegative off diagonal entries $q_{ij}\ge0$ giving the jump rate from state $i\in I$ to $j\in I$. The diagonal entries of $Q$ are chosen so that $Q$ has zero row sums and thus correspond to (minus) the total rate or leaving state $i\in I$.

Suppose that there exists a stochastic process $\{u(\x,t)\}_{t\ge0}$ adapted\footnote{Informally, a stochastic process $\{X(t)\}_{t\ge0}$ is adapted to a stochastic process $\{Y(t)\}_{t\ge0}$ if the value of $X(s)$ depends only on $\{Y(t)\}_{t\le s}$.} to $\{J(t)\}_{t\ge0}$ taking values in $L^{2}(U)$ satisfying the following properties.

\begin{enumerate} 
\item\label{continuous space}
For each $t>0$, we have that $u(\cdot,t)\in C^{2}(\bar{U})$ almost surely. That is, for each $t>0$ the spatial derivative $D^{\alpha}u(\x,t)$ extends continuously to $\bar{U}$ almost surely\footnote{If a property holds almost surely, then it holds except perhaps on an event of probability zero. Probability zero events do not affect statistics.} for each multi-index $\alpha$ satisfying $|\alpha|\le2$.
\item\label{continuous time}
For each $\x\in U$, we have that $u(\x,\cdot)\in C((0,\infty))$  almost surely.
\item\label{satisfies}
If $t>0$ and $\x\in U$, then we have that 
\begin{align*}
\partial_{t}u(\x,t)=\L_{J(t)}u(\x,t)\quad\text{almost surely.}
\end{align*}
\item\label{boundary assumption}
If $t>0$ and  $\partial_{\n}$ denotes the normal derivative, then almost surely we have
\begin{alignat*}{2}
u(\x,t)&=g^{\text{Dir}}_{J(t)}(\x),\quad &&\text{if }\x\in\Gamma_{J(t)}^{\text{Dir}},\\
\partial_{\n}u(\x,t)&=g^{\text{Neu}}_{J(t)}(\x),\quad &&\text{if }\x\in\Gamma_{J(t)}^{\text{Neu}},\\
u(f^{\text{Rob}}_{J(t)}(\x),t)+h^{\text{Rob}}_{J(t)}(\x)\partial_{\n}u(\x,t)&=g^{\text{Rob}}_{J(t)}(\x),\quad &&\text{if }\x\in\Gamma_{J(t)}^{\text{Rob}}.
\end{alignat*}
\item\label{C bound}
There exists a deterministic function $C:U\times(0,\infty)\to\R$ that is bounded on compact subsets such that if $\x\in U$, $t>0$, and $i\in I$, then almost surely
\begin{align*}
\sum_{j=1}^{d}\big(|(\bbb_{i})_{j}(\x)|+1\big)|\partial_{x_{j}}u(\x,t)|+\sum_{l,j=1}^{d}\big(|(\sigma_{i}\sigma^{T}_{i})_{l,j}(\x)|+1\big)|\partial_{x_{l},x_{j}}u(\x,t)|\\
+\big(|s_{i}(\x)|+1\big)|u(\x,t)|
\le C(\x,t).
\end{align*}
\item\label{neumann bound}
For each $t>0$ and $i\in I$, there exists a random variable $C_{2}(t)>0$ with finite expectation and a neighborhood $B$ of $\Gamma^{\text{Neu}}_{i}\cup\Gamma^{\text{Rob}}_{i}$ such that if $j\in\{1,\dots,d\}$ and $\x\in B$, then
\begin{align*}
|\partial_{x_{j}}u(\x,t)1_{J(t)=i}|\le C_{2}(t)
\quad\text{almost surely.}
\end{align*}
\end{enumerate}

Assumption~\ref{continuous space} allows us to define the pointwise process $\{u(\x,t)\}_{t\ge0}$ taking values in $\R$ for each $\x\in\bar{U}$, and Assumption~\ref{continuous time} asserts that this process is continuous in time for each $\x$ in the interior. Assumption~\ref{satisfies} asserts that the process does in fact satisfy a switching PDE and Assumption~\ref{boundary assumption} gives the switching boundary conditions. The bound in Assumption~\ref{C bound} allows us to exchange differentiation with expectation in the interior of the domain, and the bound in Assumption~\ref{neumann bound} allows us to exchange differentiation with expectation at the boundary. Dynamically, these assumptions ensure that the spatial variation in the random solution is bounded in the interior of the domain and has finite mean near the boundary.

These assumptions are satisfied if we choose a sufficiently regular domain and sufficiently regular differential operators and boundary conditions. For our motivating biological models (Examples~\ref{neural example 1}-\ref{insect example}), we consider the diffusion equation on an interval so that verifying Assumptions~\ref{continuous space}-\ref{C bound} follows from elementary properties of solutions to the diffusion equation such as smoothness and the maximum principle. For these examples, Assumption~\ref{neumann bound} can be verified by analyzing the spectral decompositions of the associated solution operators (see the Appendix).

For $M$ a positive integer, let $\x_{1},\dots,\x_{M}\in \bar{U}$, $i\in I$, and $t>0$. Define the function $v^{M}_{i}:\bar{U}^{M}\times[0,\infty)\to\R$ by
\begin{align}\label{v definition}
v^{M}_{i}(\x_{1},\dots,\x_{M},t) & := \E\Big[1_{J(t)=i}\prod_{m=1}^{M} u(\x_{m},t)\Big],
\end{align}
where $\E$ denotes pointwise expectation and $1_{A}$ denotes the indicator function on an event $A$. Define $v_{i}^{0}:=\P(J(t)=i)$.

Observe that summing the $v_{i}^{M}$'s over $i\in I$ gives
\begin{align*}
\sum_{i\in I}v_{i}^{M}(\x_{1},\dots\x_{M},t)=\E[u(\x_{1},t)\cdot\dots\cdot u(\x_{M},t)].
\end{align*}
Thus, if we let $\x_{1}=\dots=\x_{M}=\x$, then we obtain the $M$-th moment of $u(\x,t)$. The reason that we decompose the $M$-th moment into a sum of $v_{i}^{M}$'s is that the PDE for each $v_{i}^{M}$ involves $\L_{i}$ and the $v_{i}^{M}$ boundary conditions involve the $i$-th boundary conditions in (\ref{BC functions}). On the other hand, one cannot in general say the PDE and boundary conditions satisfied by the $M$-th moment of of $u(\x,t)$. This decomposition is key to determining statistics of $u(\x,t)$.

The following theorem gives the BVP satisfied by $\{v_{i}^{M}\}_{i\in I}$.

\begin{theorem}[Moments of randomly switching PDE]\label{general m}
If $t>0$ and $(\x_{1},\dots,\x_{M})\in U^{M}$, then
\begin{align}\label{430}
\begin{split}
\partial_{t}v_{i}^{M}
&=\Big(\sum_{m=1}^{M} \L^{m}_{i}\Big)v_{i}^{M}+\sum_{j\in I} q_{ji}v^{M}_j,
\end{split}
\end{align}
where $\L^{m}_{i}$ is the differential operator in (\ref{L}) acting on the $m$-th spatial variable $\x_{m}$, and $q_{ij}$ is the $(i,j)$-th entry of the generator $Q$ of $J(t)$.

If $t>0$, $\x_{m}\in\partial U$, $(\x_{1},\dots,\x_{m-1},\x_{m+1},\dots,\x_{M})\in U^{M-1}$, and $\partial_{\n_{m}}$ denotes the normal derivative with respect to the $m$-th spatial variable $\x_{m}$, then
\begin{alignat}{2}
 \frac{v_{i}^{M}(\x_{1},\dots,\x_{M},t)}{v_{i}^{M-1}(\x_{1},\dots,\x_{m-1},\x_{m+1},\dots,\x_{M},t)}&=g^{\text{Dir}}_{i}(\x_{m}),\quad &&\x_{m}\in\Gamma_{i}^{\text{Dir}},\label{431}\\
 \frac{\partial_{\n_{m}}v_{i}^{M}(\x_{1},\dots,\x_{M},t)}{v_{i}^{M-1}(\x_{1},\dots,\x_{m-1},\x_{m+1},\dots,\x_{M},t)}&=g^{\text{Neu}}_{i}(\x_{m}),\quad &&\x_{m}\in\Gamma_{i}^{\text{Neu}},\label{431n}
 \end{alignat}
 and
 \begin{align}
 \begin{split}
  \frac{v_{i}^{M}(\x_{1},\dots,\x_{m-1},f^{\text{Rob}}_{i}(\x_{m}),\x_{m+1},\dots,\x_{M},t)+h^{\text{Rob}}_{i}(\x_{m})\partial_{\n_{m}}v_{i}^{M}(\x_{1},\dots,\x_{M},t)}{v_{i}^{M-1}(\x_{1},\dots,\x_{m-1},\x_{m+1},\dots,\x_{M},t)}
 \\=g^{\text{Rob}}_{i}(\x_{m}),\quad \x_{m}\in\Gamma_{i}^{\text{Rob}}\label{431r}.
  \end{split}
\end{align}
\end{theorem}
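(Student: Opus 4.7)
The plan is to derive the evolution equation by an infinitesimal analysis that separates contributions from the jumps of $J$ and the continuous evolution of $u$, and then to obtain the boundary conditions by inserting the almost-sure identities of Assumption~\ref{boundary assumption} directly into the definition (\ref{v definition}).

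\emph{Interior PDE.} Fix $(\x_1,\dots,\x_M)\in U^{M}$ and $t>0$. For small $\Delta t>0$ I would write
\begin{align*}
v_i^M(t+\Delta t) - v_i^M(t)
 & = \E\Big[\big(1_{J(t+\Delta t)=i}-1_{J(t)=i}\big)\prod_{m=1}^{M} u(\x_m,t+\Delta t)\Big] \\
 & \quad + \E\Big[1_{J(t)=i}\Big(\prod_{m=1}^{M} u(\x_m,t+\Delta t)-\prod_{m=1}^{M} u(\x_m,t)\Big)\Big].
\end{align*}
The first term is the ``jump contribution.'' Using $\P(J(t+\Delta t)=i\mid J(t)=j)=\delta_{ij}+q_{ji}\Delta t+o(\Delta t)$ together with continuity of $u$ in $t$ (Assumption~\ref{continuous time}), it equals $\Delta t\sum_{j\in I}q_{ji}v_j^M(t)+o(\Delta t)$, where the $j=i$ term contributes $q_{ii}v_i^M$. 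For the second term, the product rule and Assumption~\ref{satisfies} give
\begin{align*}
\prod_{m} u(\x_m,t+\Delta t)-\prod_{m} u(\x_m,t)
=\Delta t\sum_{m=1}^{M}\L_{J(t)}u(\x_m,t)\prod_{k\neq m}u(\x_k,t)+o(\Delta t),
\end{align*}
and on $\{J(t)=i\}$ we may replace $\L_{J(t)}$ by $\L_i$. Assumption~\ref{C bound} provides a deterministic bound $C(\x,t)$ on each factor, so the integrand is dominated by $\prod_{m}C(\x_m,t)$; dominated convergence then lets me pull both $\L_i^m$ and the time derivative outside the expectation. Dividing by $\Delta t$ and letting $\Delta t\to0$ yields (\ref{430}).

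\emph{Boundary conditions.} For the Dirichlet case $\x_m\in\Gamma_i^{\text{Dir}}$, Assumption~\ref{boundary assumption} gives the a.s.\ identity $1_{J(t)=i}u(\x_m,t)=1_{J(t)=i}g_i^{\text{Dir}}(\x_m)$, so pulling the deterministic factor $g_i^{\text{Dir}}(\x_m)$ out of (\ref{v definition}) produces $v_i^M=g_i^{\text{Dir}}(\x_m)\,v_i^{M-1}$, which rearranges to (\ref{431}). For Neumann and Robin, I would first commute $\partial_{\n_m}$ with the expectation in (\ref{v definition}), using Assumption~\ref{neumann bound} to dominate the derivative factor near the boundary and Assumption~\ref{C bound} to dominate the remaining factors; then the a.s.\ identities of Assumption~\ref{boundary assumption} yield (\ref{431n}) and (\ref{431r}). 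The nonlocal term in the Robin case presents no extra difficulty since $f_i^{\text{Rob}}(\x_m)\in\partial U$ merely serves as another spatial evaluation point of $u$, which is handled exactly as in the other factors.

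\emph{Main obstacle.} The central technical point is the twofold interchange of limits: differentiation under the expectation in time (to obtain the $\partial_t$ and $\L_i^m$ terms) and differentiation under the expectation in space at the boundary (to handle the normal derivative in (\ref{431n})--(\ref{431r})). Assumptions~\ref{C bound} and \ref{neumann bound} are tailored precisely to supply the dominating functions required; the only subtlety is that one must dominate a \emph{product} of factors, which is handled by applying the bounds coordinate-wise so that $\prod_m C(\x_m,t)$ remains deterministic and finite.
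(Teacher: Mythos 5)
Your overall strategy matches the paper's: an infinitesimal analysis of $v_i^M(t+\Delta t)-v_i^M(t)$ using the transition asymptotics of $J$, with Assumption~\ref{C bound} supplying the dominating function for the interchange of derivatives and expectation in the interior, and Assumption~\ref{neumann bound} plus dominated convergence handling the normal derivative at the boundary; the Dirichlet condition is read off directly, exactly as in the paper. Your bookkeeping differs mildly --- you split the increment into a ``jump'' term and a ``drift'' term by adding and subtracting $1_{J(t)=i}\prod_m u(\x_m,t+\Delta t)$, whereas the paper partitions on the number of jumps of $J$ in the interval and discards the two-or-more-jumps event as $o(\Delta t)$ --- but the two decompositions need the same estimates and are interchangeable.

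One justification as written does not suffice. In the jump term you replace $\prod_m u(\x_m,t+\Delta t)$ by $\prod_m u(\x_m,t)$ citing only ``continuity of $u$ in $t$ (Assumption~\ref{continuous time}).'' Continuity gives no rate, and the indicator difference $1_{J(t+\Delta t)=i}-1_{J(t)=i}$ is correlated with the increment of $u$, so the error $\E\big[(1_{J(t+\Delta t)=i}-1_{J(t)=i})\big(\prod_m u(\x_m,t+\Delta t)-\prod_m u(\x_m,t)\big)\big]$ cannot be dismissed by continuity alone: you must combine the mean value theorem with Assumption~\ref{C bound} (so the $u$-increment is $O(\Delta t)$ with a deterministic constant) with the fact that the indicator difference vanishes off an event of probability $O(\Delta t)$, giving $o(\Delta t)$ overall. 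This is precisely the paper's estimate (\ref{sigma close}), and with that repair your step goes through. Two smaller points: you compute only the right-hand derivative, so to conclude the two-sided $\partial_t v_i^M$ you need the analogous estimate for $\Delta t<0$ (the paper notes this explicitly); and the factorization $\E[1_{J(t)=j}1_{J(t+\Delta t)=i}\prod_m u(\x_m,t)]=\P(J(t+\Delta t)=i\mid J(t)=j)\,v_j^M(t)$ rests on the Markov property of $J$ together with the adaptedness of $u(\cdot,t)$ to $\{J(s)\}_{s\le t}$, which deserves explicit mention.
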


\begin{remark}
{\rm In matrix notation, the PDE in (\ref{430}) is
\begin{align*}
\partial_{t}{\bf v}=(L+Q^{T}){\bf v},
\end{align*}
where ${\bf v}$ is the vector with $i$-th component $v_{i}^{M}$, $L$ is the diagonal matrix with $i$-th diagonal entry $\sum_{m=1}^{\M}\L_{i}^{m}$, and $Q^{T}$ is the transpose of the generator of $J(t)$.}
\end{remark}

\begin{remark}
{\rm If the randomly switching PDE only imposes Dirichlet conditions (that is, $\Gamma^{\text{Dir}}_{i}=\partial U$ for all $i\in I$), then Assumption~\ref{neumann bound} is superfluous.}
\end{remark}

\begin{proof}
We first prove (\ref{430}) for the case $M=1$. Fix $\x\in U$, $t>0$, and $i\in  I$. For each $h\in\mathbb{R}$, define the events
\begin{align*}
A^{t,h}_0&=\{\text{no jump times of }J\text{ in }[t,t+h]\}\\
A^{t,h}_1&=\{\text{one jump time of }J\text{ in }[t,t+h]\}\\
A^{t,h}_2&=\{\text{two or more jump times of }J\text{ in }[t,t+h]\},
\end{align*}
where $s$ is said to be a \emph{jump time} of $J$ is $\lim_{t\to s+}J(t)\ne\lim_{t\to s-}J(t)$. For ease of notation, for the remainder of the proof we use $\{A\}$ to denote the indicator function for an event $A$. Suppressing the $\x$ dependence, we have that
\begin{align}
\partial_{t} v^{1}_i(t)
& = \lim_{h\to0}\frac1h \E\big[\big(u(t+h)\{J(t+h)=i\}-u(t)\{J(t)=i\}\big)\{A^{t,h}_0\}\big]\nonumber\\
&\quad + \lim_{h\to0}\frac1h \E\big[\big(u(t+h)\{J(t+h)=i\}-u(t)\{J(t)=i\}\big)\{A^{t,h}_1\}\big]\nonumber\\
&=:\mathcal{T}_{0}+\mathcal{T}_{1},\label{2 terms}
\end{align}
by the bound in Assumption~\ref{C bound} and the fact that $\P(A^{t,h}_{2})=o(h)$. The fact that $\P(A^{t,h}_{2})=o(h)$ is fairly standard \cite{norris98} and follows from the fact that the time between jumps of a continuous-time Markov chain is exponentially distributed. We work on the two terms in (\ref{2 terms}) separately.

First, note that
\begin{align*}
\mathcal{T}_{0}
& = \lim_{h\to0} \E\big[\frac1h\big(u(t+h)-u(t)\big)\{A^{t,h}_0\}\{J(t)=i\}\big].
\end{align*}
Now, by Assumption~\ref{satisfies}, we have the following almost sure equality
\begin{align*}
\lim_{h\to0}\frac1h\big(u(t+h)-u(t)\big)\{A^{t,h}_0\}\{J(t)=i\}=\partial_{t}u(t)\{J(t)=i\}.
\end{align*}
Further, by the mean value theorem there exists a random $\xi(h)$ so that
\begin{align*}
\big|\frac1h\big(u(t+h)-u(t)\big)\{A^{t,h}_0\}\{J(t)=i\}\big|&=
|\partial_{t}u(\xi(h))\{A^{t,h}_0\}\{J(t)=i\}|\\
&=|\L_{i}u(\xi(h))\{A^{t,h}_0\}\{J(t)=i\}|\\
&\le \sup_{s\in(t-h,t+h)}C(\x,s)\quad\text{almost surely},
\end{align*}
by Assumption~\ref{satisfies} and the bound in Assumption~\ref{C bound}. Thus, by the bounded convergence theorem, we have that
\begin{align*}
\mathcal{T}_{0}=\E[\partial_{t}u(t)\{J(t)=i\}]=\E[\L_i u(t)\{J(t)=i\}],
\end{align*}
where the second equality follows by virtue of Assumption~\ref{satisfies}. It is then straightforward to use Assumption~\ref{C bound} and the bounded convergence theorem to exchange the differential operator $\L_{i}$ with the expectation to conclude
\begin{align}\label{Psi0}
\mathcal{T}_{0}=\L_{i}v_{i}^{1}(t).
\end{align}

Moving to the second term in (\ref{2 terms}), observe that
\begin{align}
&\E\big[u(t+h)\{J(t+h)=i\}\{A^{t,h}_1\}\big] - \E\big[u(t)\{J(t)=i\}\{A^{t,h}_1\}\big]\nonumber\\
\begin{split}\label{2 sums}
& = \sum_{j\ne i}\E\big[u(t+h)\{J(t+h)=i\}\{J(t)=j\}\{A^{t,h}_{1}\}\big]\\
&\quad- \sum_{j\ne i}\E\big[u(t)\{J(t+h)=j\}\{J(t)=i\}\{A^{t,h}_1\}\big].
\end{split}
\end{align}
Suppose $h>0$. Focusing on the second sum in (\ref{2 sums}), let $j\ne i$ and observe that
\begin{align}
\begin{split}\label{easy term}
&\E[u(t)\{J(t+h)=j\}\{J(t)=i\}\{A^{t,h}_{1}\}]\\
&\quad=\E[u(t)\{J(t+h)=j\}\{J(t)=i\}]+o(h)\\
&\quad = \P(J(t+h)=j|J(t)=i)\E[u(t)\{J(t)=i\}]+o(h)
 =  hq_{ij} v^{1}_i(t)+o(h),
\end{split}
\end{align}
by $\P(A^{t,h}_{2})=o(h)$, Assumption~\ref{C bound}, and $\P(J(t+h)=j|J(t)=i)= hq_{ij}+o(h)$ \cite{norris98}.

We would like to apply the same argument to the terms in the first sum in (\ref{2 sums}), but those terms contain a $u(t+h)$ instead of a $u(t)$ in the expectation. Fortunately, we can show that these are close to each other. If $j\ne i$ and the random $\sigma\in(0,h)$ is such that $t+\sigma$ is the jump time between $t$ and $t+h$, then
\begin{align}\label{sigma close}
\begin{split}
&\E[(u(t+h)-u(t))\{J(t+h)=i\}\{J(t)=j\}\{A^{t,h}_1\}]\\
& = \E[(u(t+h)-u(t+\sigma))\{J(t+h)=i\}\{J(t)=j\}\{A^{t,h}_1\}]\\
&\qquad + \E[(u(t+\sigma)-u(t))\{J(t+h)=i\}\{J(t)=j\}\{A^{t,h}_1\}]\\
& \le h\sup_{s\in(t-h,t+h)}C(\x,s)\P(J(t+h)=i\}\{J(t)=j))+o(h),
\end{split}
\end{align}
by Assumption~\ref{satisfies}, the mean value theorem, the bound in Assumption~\ref{C bound}, and $\P(A^{t,h}_{2})=o(h)$. Thus, combining~(\ref{easy term}) and (\ref{sigma close}), we have that if $j\ne i$, then
\begin{align}\label{harder term}
\E\big[u(t+h)\{J(t+h)=i\}\{J(t)=j\}\{A^{t,h}_1\}\big]
& =  hq_{ji} v^{1}_j(t)+o(h).
\end{align}

An analogous argument shows that (\ref{easy term}) and (\ref{harder term}) also hold for $h<0$. Putting this together, we have that
\begin{align}\label{Psi1}
\mathcal{T}_{1}
& = \sum_{j\ne i} q_{ji}v^{1}_j(t) - \sum_{j\ne i} q_{ij}v^{1}_i(t)=\sum_{j\in I}q_{ji}v_{j}^{1}(t).
\end{align}
Finally, combining (\ref{2 terms}), (\ref{Psi0}), and (\ref{Psi1}) verifies (\ref{430}) for $M=1$.

The proof of (\ref{430}) for $M>1$ is similar, and so we only sketch it. As before, we have that
\begin{align*}
\partial_{t} v^{M}_i(t)
 = \lim_{h\to0}\frac1h \E\Big[\Big(\{J(t+h)=i\}\prod_{m=1}^{M}u(\x_{m},t+h)-\{J(t)=i\}\prod_{m=1}^{M}u(\x_{m},t)\Big)\{A_{0}^{t,h}\}\Big]\\
\quad + \lim_{h\to0}\frac1h \E\Big[\Big(\{J(t+h)=i\}\prod_{m=1}^{M}u(\x_{m},t+h)-\{J(t)=i\}\prod_{m=1}^{M}u(\x_{m},t)\Big)\{A_{1}^{t,h}\}\Big]\\
=:\mathcal{T}^{M}_{0}+\mathcal{T}^{M}_{1}.
\end{align*}
The proof that
\begin{align*}
\mathcal{T}^{M}_{0}=\sum_{m=1}^{M} \L^{m}_{i}v_{i}^{M}(\x_{1},\dots,\x_{M},t)
\end{align*}
proceeds as the proof of (\ref{Psi0}) with the added complication that we must use the product rule for differentiation. The proof that 
\begin{align*}
\mathcal{T}^{M}_{1}=\sum_{j\in I} q_{ji}v^{M}_j(\x_{1},\dots,\x_{M},t)
\end{align*}
is the same as the proof of (\ref{Psi1}).

We now verify the boundary conditions. The Dirichlet condition in (\ref{431}) is immediate. To verify the Neumann and Robin conditions in (\ref{431n}) and (\ref{431r}), it is enough to show that if $\x_{k}\in\Gamma^{\text{Neu}}_{i}\cup\Gamma^{\text{Rob}}_{i}$, then
\begin{align*}
\partial_{\n_{k}}\E\Big[\{J(t)=i\}\prod_{m=1}^{M}u(\x_{m},t)\Big] = \E\Big[\{J(t)=i\}\partial_{\n_{k}} u(\x_{k},t)\prod_{m=1,m\ne k}^{M}u(\x_{m},t)\Big].
\end{align*}
This follows immediately from Assumption~\ref{neumann bound} and the dominated convergence theorem.
\qquad\end{proof}


\subsection{PDE examples}\label{PDE examples}
In this section, we apply Theorem~\ref{general m} to a series of examples. The purpose of this section is to give some of the biological applications that prompted this paper and to illustrate how Theorem~\ref{general m} can elucidate the dynamics of these stochastic PDEs. Checking that these examples satisfy the appropriate hypotheses is discussed in the Appendix.

\begin{example}[Neurotransmitter concentration]\label{neural example 1}

\begin{figure}[t!]
\centering
\includegraphics[width=1\linewidth]{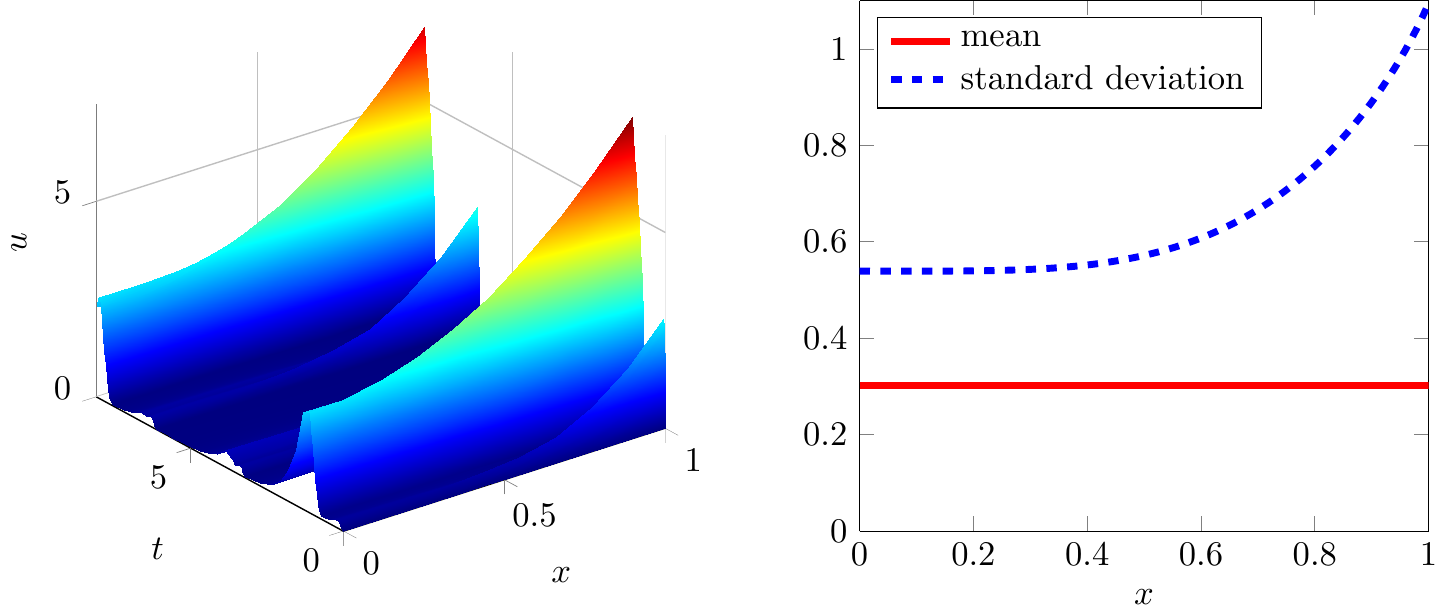}
\caption{Dynamics of Example~\ref{neural example 1}. On the left, we plot a single stochastic realization of Example~\ref{neural example 1}. From this realization, one can see that the stochastic solution fluctuates wildly in the region near the switching boundary at $x=1$ compared to the static boundary at $x=0$. This discrepancy is captured by the plot on the right, which shows that the large time mean solution is constant in space, but the large time standard deviation spikes at the switching boundary. The mean solution was found by explicitly solving (\ref{neural BVP}), and the variance was found by numerically solving (\ref{neural BVP2}). As a check, we also performed Monte Carlo simulations of the stochastic system and computed the empirical mean and variance. The empirical Monte Carlo curves were indistinguishable from the curves obtained from solving (\ref{neural BVP}) and (\ref{neural BVP2}). In both plots above, we take $L=D=\beta=1$ and $c=\alpha=10$.}
\label{figure neural}
\end{figure}

{\rm Sets of neurons can project to a distant region of the brain and trigger the release of neurotransmitter in that region by firing action potentials. This mechanism, known as volume transmission, enables groups of neurons to affect distant regions of the brain and is an important factor in motor control, Parkinson's disease, and the sleep/wake cycle \cite{fuxe, reed}.

As a prototype model for volume transmission, consider neurotransmitter diffusing in the interval $[0,L]$ with a single nerve terminal at $x=L$ that switches between a quiescent state and a firing state. Let $u(x,t)$ be the concentration of neurotransmitter in the interval $[0, L]$ and suppose $u$ satisfies the diffusion equation
\begin{align*}
\partial_{t}u & = D\Delta u \quad x\in(0,L),\;t>0,
\end{align*}
with $\partial_{x}u(0,t)=0$ and a condition at $x=L$ that randomly switches between
\begin{align*}
u(L,t)  = 0\text{ (quiescent neuron)}
\quad\Markov{\alpha}{\beta}\quad
\partial_{x}u(L,t)  = c>0\text{ (firing neuron)},
\end{align*}
with switching rates $\alpha$ and $\beta$. At $x=L$, the absorbing Dirichlet condition corresponds to the quiescent state (absorbing neurotransmitter), and the inhomogeneous Neumann condition corresponds to the firing state of the neuron (releasing neurotransmitter). Reference \cite{Lawley15neural1} appeals to Theorem~\ref{general m} analyze this model and more complicated models (with multiple neurons that fire independently) in order to understand certain aspects of volume transmission.

For this model, applying Theorem~\ref{general m} with $M=1$ shows that the steady state expected neurotransmitter, $\lim_{t\to\infty}\E[u(x,t)]$, is the sum $v_{0}(x)+v_{1}(x)$, where
\begin{align}\label{neural BVP}
\begin{split}
&\begin{pmatrix}0\\0\end{pmatrix}=D\Delta\begin{pmatrix}v_0\\v_1\end{pmatrix}+
\begin{pmatrix}
-\beta & \alpha\\
\beta & -\alpha
\end{pmatrix}
\begin{pmatrix}v_0\\v_1\end{pmatrix},\quad x\in(0,L),\\
&\partial_{x}v_{0}(0)=\partial_{x}v_{1}(0)=v_{0}(L)=0,
\quad
\partial_{x}v_{1}(L)=c\beta/(\alpha+\beta).
\end{split}
\end{align}
Solving this BVP explicitly, we find that the mean neurotransmitter concentration is constant in space. Furthermore, while the mean is constant, applying Theorem~\ref{general m} with $M=2$ reveals that the steady state 2-point correlation, $\lim_{t\to\infty}\E[u(x,t)u(y,t)]$, is the sum $v_{0}^{(2)}(x,y)+v_{1}^{(2)}(x,y)$, where
\begin{align}\label{neural BVP2}
\begin{split}
&\begin{pmatrix}0\\0\end{pmatrix}=D\Delta\begin{pmatrix}v_0^{(2)}\\v_1^{(2)}\end{pmatrix}+
\begin{pmatrix}
-\beta & \alpha\\
\beta & -\alpha
\end{pmatrix}
\begin{pmatrix}v_0^{(2)}\\v_1^{(2)}\end{pmatrix},\quad (x,y)\in(0,L)\times(0,L),\\
&\partial_{x}v^{(2)}_{0}(0,y)=\partial_{y}v^{(2)}_{0}(x,0)=\partial_{x}v^{(2)}_{1}(0,y)=\partial_{y}v^{(2)}_{1}(x,0)=0,\\
&v^{(2)}_{0}(L,y)=v^{(2)}_{0}(y,L)=0,
\quad
\partial_{x}v^{(2)}_{1}(L,y)=cv_{1}(y),\quad\partial_{y}v^{(2)}_{1}(x,L)=cv_{1}(x).
\end{split}
\end{align}
It is straightforward to solve this BVP numerically and obtain that the standard deviation of neurotransmitter spikes at $x=L$, despite the fact that the mean is constant in space (see Figure~\ref{figure neural}). Thus, the actual stochastic dynamics depend heavily on space, even though the mean dynamics do not. This discrepancy highlights the utility of using Theorem~\ref{general m} to calculate higher order moments. Using Theorem~\ref{general m} to analyze two and three-dimensional models will be the subject of future work.
}

\end{example}


\begin{figure}[t!]
\centering
\includegraphics[width=1\linewidth]{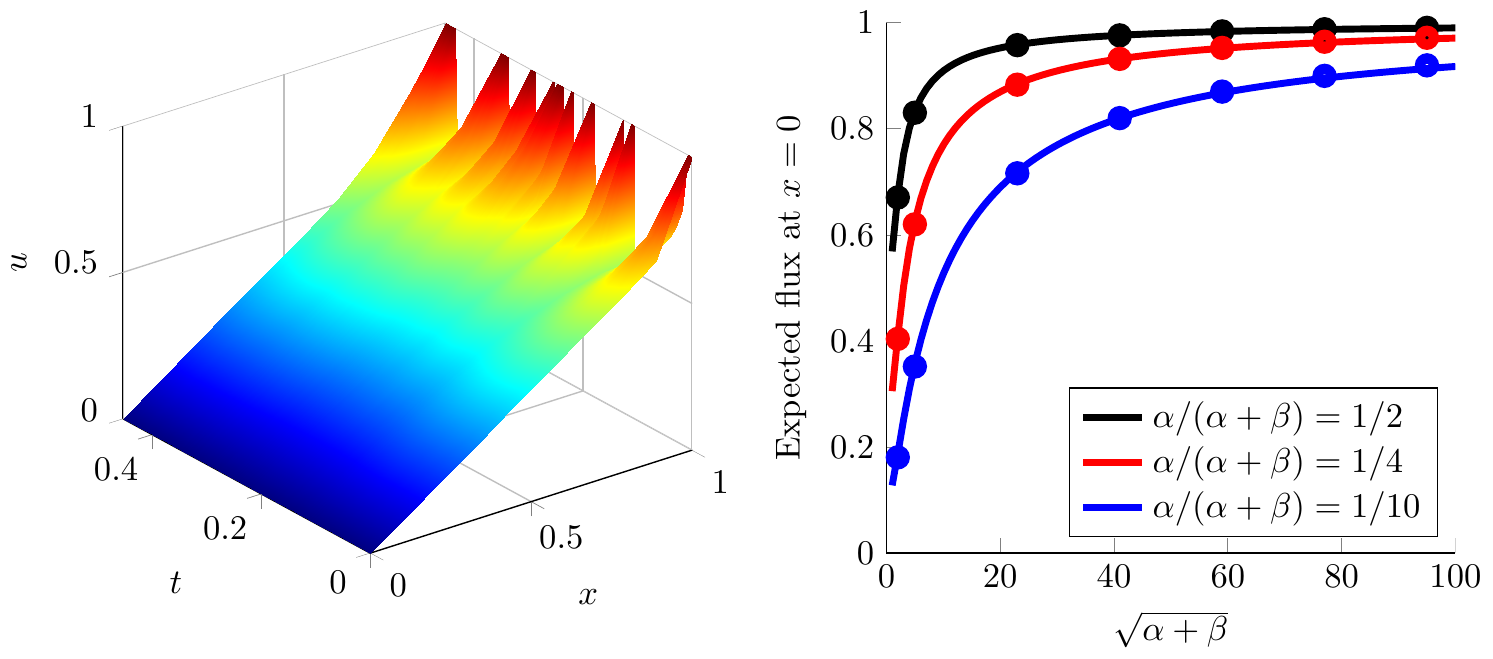}
\caption{Dynamics of Example~\ref{insect example}. On the left, we plot a single stochastic realization of Example~\ref{insect example}, and from this realization one can see that the slope of the solution at $x=0$ (and hence the flux) stays relatively close to 1, despite the fact that the boundary condition at $x=1$ is often no flux Neumann. The plot on the right captures this phenomena by plotting the large time expected flux at $x=0$ as a function of the switching rate for 3 different values of the proportion of time in the inhomogeneous Dirichlet state. We take $L=D=c=1$ so that if the boundary condition at $x=1$ was always inhomogeneous Dirichlet, then the flux at $x=0$ would be 1. Thus, these curves show that the flux at $x=0$ can remain high even when the proportion of time in the inhomogeneous Dirichlet state is small, provided the switching is fast. Biologically, this means that an insect can have its spiracles open a small proportion of time and yet receive essentially just as much oxygen as if its spiracles were always open, provided they open and close with high frequency. The curves in the right plot were found by explicitly solving the PDE in (\ref{neural BVP}) with boundary conditions in (\ref{insect BVP}). The dots in the right plot are empirical fluxes calculated from Monte Carlo simulations of the stochastic system. In both plots, we take $L=D=c=1$. In the left plot, we take $\alpha=25$ and $\beta=75$.}
\label{figure insect}
\end{figure}

\begin{example}[Insect respiration]\label{insect example}
{\rm Essentially all insects breathe through a network of tubes that allows oxygen and carbon dioxide to diffuse to and from their cells \cite{Wigglesworth31}. Air enters and exits this network through valves (called spiracles) in the exoskeleton, which sometimes regulate air flow by rapidly opening and closing. The purpose of this rapid opening and closing has perplexed physiologists for decades \cite{Chown06, Lighton96}. 

In order to explain this behavior, the following simple model was first proposed in \cite{Lawley15sima}. Let $u(x,t)$ be the oxygen concentration in a respiratory tube represented by the interval $[0, L]$, and so $u$ satisfies the diffusion equation
\begin{align*}
\partial_{t}u & = D\Delta u \quad x\in(0,L),\;t>0.
\end{align*}
We let $x=0$ represent where the tube meets the insect tissue, and so we impose an absorbing Dirichlet condition there, $u(0,t)=0$. The $x=L$ end represents the spiracle, and we suppose that the boundary condition there switches between
\begin{align*}
u(L,t)  = c>0\text{ (open spiracle)}
\quad\Markov{\alpha}{\beta}\quad
\partial_{x}u(L,t)  = 0\text{ (closed spiracle)},
\end{align*}
with switching rates $\alpha$ and $\beta$. When the spiracle is open we set $u(L,t)$ equal to the ambient oxygen concentration, and we impose a no flux condition when the spiracle is closed.

Applying Theorem~\ref{general m} to this model reveals that the steady state expected oxygen concentration, $\lim_{t\to\infty}\E[u(x,t)]$, is the sum $v_{0}(x)+v_{1}(x)$, where $v_{0}$ and $v_{1}$ satisfy the PDE in (\ref{neural BVP}) with boundary conditions
\begin{align}\label{insect BVP}
&v_{0}(0)=v_{1}(0)=\partial_{x}v_{0}(L)=0,
\quad
v_{0}(L)=c\alpha/(\alpha+\beta).
\end{align}
Solving this BVP explicitly yields the surprising result that an insect can maintain a large oxygen flux to its tissue while keeping its spiracles closed the vast majority of the time (see Figure~\ref{figure insect}). A forthcoming physiology paper will employ Theorem~\ref{general m} to further analyze this and more detailed models involving branching respiratory tubes \cite{lawley16pb5}.}

\end{example}


\begin{figure}[t!]
\centering
\includegraphics[width=1\linewidth]{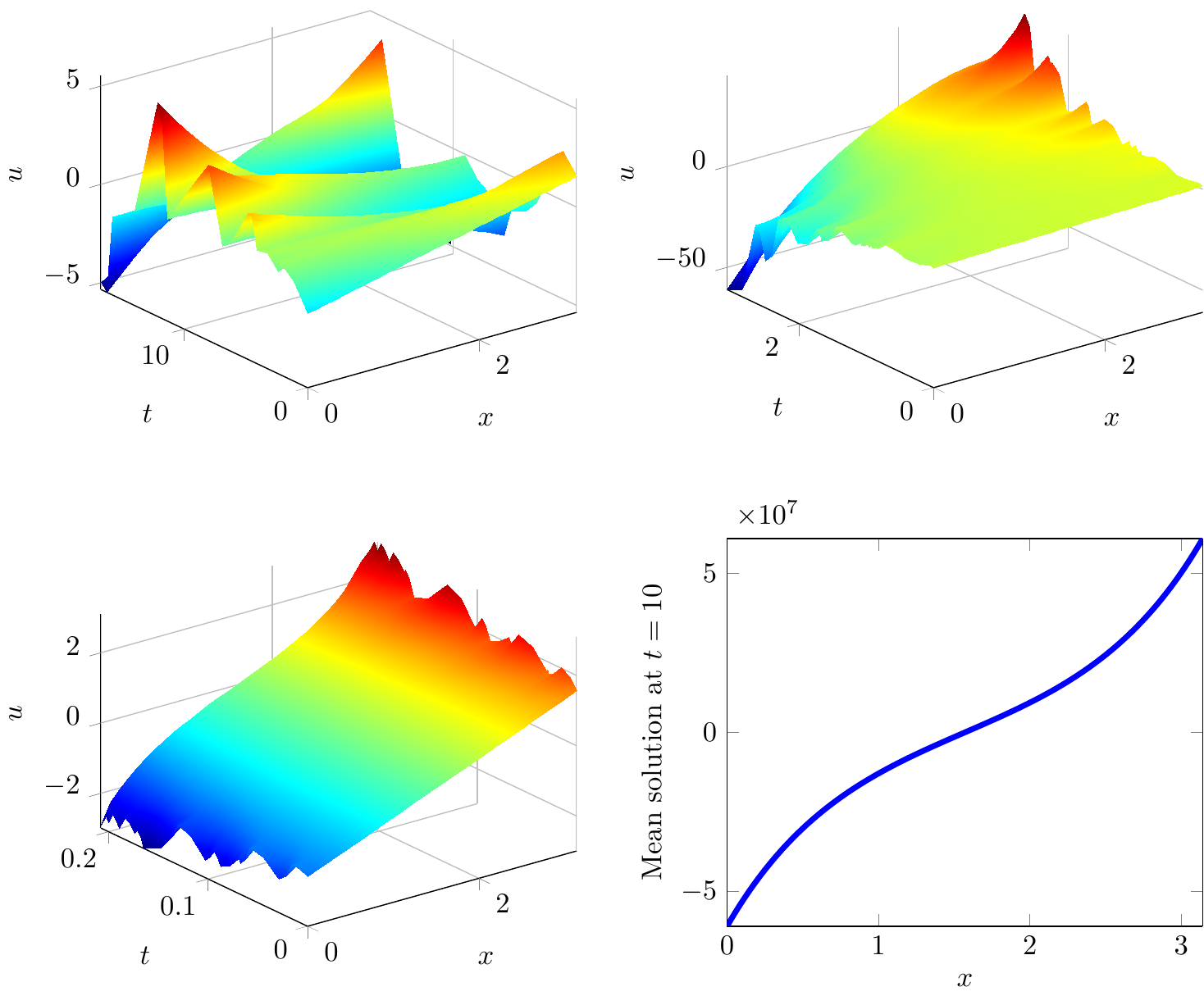}
\caption{Dynamics of Example~\ref{thermostat}. We plot single stochastic realizations of Example~\ref{thermostat} for increasing values of the switching rate $\alpha$ in the top left ($\alpha=1$), top right ($\alpha=10$), and bottom left ($\alpha=100$). In the bottom right, we plot the mean of the solution at time $t=10$ with $\alpha=100$. The initial condition is $u(x,0)=x-\pi/2$, and thus the mean is rapidly diverging. This last plot was obtained by numerically solving the BVP in Theorem~\ref{general m}. As a check, we also performed Monte Carlo simulations of the stochastic system and computed the empirical mean. This empirical mean Monte Carlo curve was indistinguishable from the curve obtained from Theorem~\ref{general m} and thus is not plotted. In all plots, we take $k=3$ which is below the critical threshold $k_{c}\approx5.6$. That is, for our choice of $k=3<k_{c}$, both individual systems vanish but the switched system blows up.}
\label{figure thermo}
\end{figure}

\begin{example}[Switching thermostat]\label{thermostat}
{\rm Having given some of the biological models that prompted this paper, we now give an example to show that the dynamics of a switching PDE can differ tremendously from the dynamics of the corresponding non-switching PDEs. Specifically, we give two sets of boundary conditions for the heat equation such that the solution converges to zero for each set of boundary conditions, but if the boundary conditions randomly switch, then Theorem~\ref{general m} reveals the surprising result that the solution goes to infinity.

Consider the following rudimentary model of a thermostat \cite{guidotti97, kalna04}. Suppose the temperature, $u(x,t)$, in the interval $[0,\pi]$ satisfies
\begin{align}\label{temp}
\partial_{t}u & = \Delta u \quad x\in(0,\pi),\;t>0.
\end{align}
To model an air conditioner located at $x=0$ and a thermostat located at $x=\pi$, we impose the boundary conditions
\begin{align}\label{temp0}
\partial_{x}u(0,t)  = k u(\pi,t)
\quad\text{and}\quad
\partial_{x}u(\pi,t)  = 0,
\end{align}
for some $k>0$. Notice that the flux at the air conditioner depends on the temperature at the thermostat. If the locations of the air conditioner and the thermostat were flipped, then we would impose
\begin{align}\label{temp1}
\partial_{x}u(0,t)  = 0
\quad\text{and}\quad
\partial_{x}u(\pi,t)  = -k u(0,t).
\end{align}
It can be shown that there exists a critical $k_{c}>0$ such that if $k\in(0,k_{c})$, then the solution to (\ref{temp}) with either boundary conditions (\ref{temp0}) or (\ref{temp1}) will vanish at large time for any initial condition \cite{guidotti97, kalna04}.

Now, suppose the boundary conditions randomly switch between (\ref{temp0}) and (\ref{temp1}) according to a continuous-time Markov jump process with jump rate $\alpha>0$. Then, if we suppose Assumptions~\ref{C bound}-\ref{neumann bound} are satisfied (see the Appendix for a discussion), then Theorem~\ref{general m} gives that the expected temperature, $\E[u(x,t)]$, is the sum $v_{0}(x,t)+v_{1}(x,t)$, where
\begin{align}
\begin{split}\label{thermostat BVP}
&\partial_t\begin{pmatrix}v_0\\v_1\end{pmatrix}=\Delta\begin{pmatrix}v_0\\v_1\end{pmatrix}+
\alpha\begin{pmatrix}
-1 & 1\\
1 & -1
\end{pmatrix}
\begin{pmatrix}v_0\\v_1\end{pmatrix},\quad x\in(0,\pi)\text{, }t>0,\\
&\partial_{x}v_{0}(0,t)=kv_{0}(\pi,t),
\quad
\partial_{x}v_{0}(\pi,t)=0,\\
&\partial_{x}v_{1}(0,t)=0,
\quad
\partial_{x}v_{1}(\pi,t)=-kv_{1}(0,t).
\end{split}
\end{align}
By analyzing this BVP, one can show that there exists $\alpha>0$ and $k\in(0,k_{c})$ such that $\E[u(x,t)]\to\infty$ in the $L^{1}[0,\pi]$ norm. Thus, stochastically switching between two stable PDE BVPs induces a blowup. Figure~\ref{figure thermo} plots $\E[u(x,t)]$ in this parameter regime where both individual systems vanish but the switched system blows up. A detailed bifurcation analysis of (\ref{thermostat BVP}) and the underlying stochastic system will be included in a forthcoming publication. Such a blowup is reminiscent of stochastically switched linear ODEs that blowup despite switching between only stable systems \cite{benaim14, lawley14}. 

}
\end{example}

\begin{example}[Deriving Robin boundary and interface jump conditions]\label{robin example}{\rm It was recently shown that the classical Robin boundary condition and interface jump condition can be derived as averages of certain switching conditions \cite{Lawley15robin}. By analyzing the BVP given by Theorem~\ref{general m} for the mean of the switching PDE in Example~\ref{insect example}, it was shown that the mean of the solution with the switching condition converges to a solution with a Robin condition in a certain fast switching limit. That is, switching between Dirichlet and Neumann conditions produces a Robin condition if the switching rate goes to infinity \emph{and} the proportion of time in the Dirichlet state goes to 0 at a corresponding rate. If, however, the proportion of time in the Dirichlet state is bounded away from zero, then one obtains pure Dirichlet as the switching rate goes to infinity.

To derive an interface jump condition, suppose that $u(x,t)$ satisfies the diffusion equation on $[0,L]$ with deterministic boundary conditions (say, $u(0,t)=0$ and $u(L,t)=c>0$), but with a randomly imposed no flux condition at $x=L/2$:
\begin{align*}
J(t)\partial_{x}u(L/2,t) = 0,
\end{align*}
where $J(t)\in\{0,1\}$ is a continuous-time Markov jump process
\begin{align*}
0\Markov{\alpha}{\beta}1,
\end{align*}
with switching rates $\alpha$ and $\beta$. If $J(t)$ starts in its invariant distribution, then a slight generalization of Theorem~\ref{general m} (see the Appendix for details) shows that the mean, $\E[u(x,t)]$, is the sum $v_{0}(x,t)+v_{1}(x,t)$, where
\begin{align}
&\partial_t\begin{pmatrix}v_0\\v_1\end{pmatrix}=D\Delta\begin{pmatrix}v_0\\v_1\end{pmatrix}+Q^{T}\begin{pmatrix}v_0\\v_1\end{pmatrix},\quad x\in(0,L/2)\cup(L/2,L)\text{, }t>0,\label{interface equation}\\
&v_0(0,t)=v_1(0,t)=0,
\quad
v_0(L,t)=\rho_{0}c,
\quad\text{and}\quad
v_1(L,t)=\rho_{1}c,\label{interface boundary}\\
&{v_0}_+={v_0}_-,
\quad
\partial_x{v_0}_+=\partial_x{v_0}_-,
\quad\text{and}\quad
\partial_x{v_1}_+=\partial_x{v_1}_-=0,\label{interface interface}
\end{align}
where $f_{\pm}:=\lim_{x\to L/2\pm}f(x)$. Starting with this BVP, it was proven that in a certain fast switching limit, the mean, $\E[u(x,t)]$, converges to the solution of the heat equation on $[0,L]$ with an interface jump condition at $x=L/2$ \cite{Lawley15robin}.}
\end{example}

\section{Particle perspective}\label{section particle}

\subsection{Hierarchy of joint exit statistics}

In this section, we study sets of particles that diffuse in a bounded domain. We suppose that some aspects of the environment randomly change according to a Markov jump process. This changing environment causes (a) the SDE governing the motion of each particle to change and (b) pieces of the boundary of the domain to switch between being absorbing or reflecting. Though the Brownian motions driving the diffusion of each particle are independent, the particle trajectories are correlated since the particles diffuse in the same randomly changing environment.

In Theorems~\ref{theorem survival}, \ref{theorem split}, and \ref{theorem mfpt}, we derive BVPs for various exit statistics of the particles. These BVPs are closely related to the BVPs given in Theorem~\ref{general m} for moments of solutions to randomly switching PDEs. The fact that \emph{some} connection exists between these two perspectives on diffusion in a random environment is not surprising. Indeed, connections between potential theory and Brownian motion have a long history \cite{doob54, kac51, kakutani44}. However, the correspondence elucidated here between moments of switching PDEs and joint exit statistics for multiple particles following a switching SDE was not expected. Further, this correspondence allows one to go back and forth between the two perspectives in order to exploit the advantages of each perspective \cite{lawley16pb5}.

First, we define the jump process controlling the switching environment. As in section~\ref{section partial} above, let $\{J(t)\}_{t\ge0}$ be a continuous time Markov jump process on a finite set $I$ with generator $Q$. Recall that the generator $Q$ is an $I\times I$ matrix with nonnegative off diagonal entries $q_{ij}\ge0$ giving the jump rate from state $i\in I$ to $j\in I$. The diagonal entries of $Q$ are chosen so that $Q$ has zero row sums and thus correspond to (minus) the total rate or leaving state $i\in I$.

Let $U\subset\R^{d}$ be a bounded open set with $C^{2}$ boundary. In order to describe the switching boundary, for each $i\in I$ let the disjoint sets $\Gamma_{i}^{\text{abs}}$ and $\Gamma_{i}^{\text{ref}}$ partition the boundary
\begin{align*}
\Gamma_{i}^{\text{abs}}\cup\Gamma_{i}^{\text{ref}}=\partial U.
\end{align*}
If $J(t)=i$, then $\Gamma_{i}^{\text{abs}}$ is absorbing and $\Gamma_{i}^{\text{ref}}$ is reflecting. In contrast to section~\ref{section partial} above where we considered Dirichlet, Neumann, and Robin boundary conditions for a PDE, here we consider only absorbing and reflecting boundary conditions for an SDE. The reason for this disparity is that while absorbing and reflecting conditions are SDE analogues of Dirichlet and Neumann conditions, the SDE analog of a Robin condition is significantly more complicated (see \cite{ito12} section 2.3).

Suppose there are $M$ particles that begin at positions $\x_{1},\dots,\x_{M}\in\bar{U}$. For each $m\in\{1,\dots,M\}$, let $\X_{m}(t)$ denote the position of the $m$-th particle at time $t\ge0$ and let $\tau_{m}$ be the first time that the $m$-th particle hits an absorbing piece of the boundary. That is, $\tau_{m}$ is the stopping time\footnote{A stopping time is a random variable whose value is interpreted as the time when a given stochastic process is terminated according to some rule that depends on current and past states. A classical example of a stopping time is a first passage time.}
\begin{align*}
\tau_{m}:=\inf\big\{t\ge0:\X_{m}(t)\in\Gamma_{J(t)}^{\text{abs}}\big\},
\end{align*}
which we assume to be finite almost surely. For $t>\tau_{m}$, we set $\X_{m}(t)=\X_{m}(\tau_{m})$ and say that the particle has exited the domain. For $t\le\tau_{m}$, we assume that $\{\X_{m}(t)\}_{t\ge0}$ is the unique solution to
\begin{align}\label{SDE3}
d\X_{m}(t)&=\bbb_{J(t)}(\X_{m}(t))\,dt+\sigma_{J(t)}(\X_{m}(t))\,d\W_{m}(t)+\n(\X_{m}(t))\,dK_{m}(t),
\end{align}
with $\X_{m}(0)=\x_{m}$, where $\{\bbb_{i}\}_{i\in I}$ and $\{\sigma_{i}\}_{i\in I}$ are given Lipschitz functions
\begin{align*}
\bbb_{i}:\bar{U}\mapsto\R^{d}
\quad\text{and}\quad
\sigma_{i}:\bar{U}\mapsto\R^{d\times d},
\end{align*}
$\W_{m}(t)$ is an $\R^{d}$-valued standard Brownian motion, $\n:\partial U\mapsto\R^{d}$ is the inner normal field, and $K_{m}(t)$ is the \emph{local time} of $\X_{m}(t)$ in $\partial U$. The local time is the time that $\X_{m}(t)$ spends on $\partial U$. Precisely, $K_{m}(t)$ is non-decreasing and increases only when $\X_{m}(t)$ is in $\partial U$ and $K_{m}(0)=0$. The significance of the local time term in (\ref{SDE3}) is that it forces $\X_{m}$ to reflect from $\partial U$ in the normal direction and thus ensures that $\X_{m}(t)\in\bar{U}$ for all $t\ge0$.

We assume that $\{\W_{1}(t)\}_{t\ge0}$, \dots, $\{\W_{M}(t)\}_{t\ge0}$, and $\{J(t)\}_{t\ge0}$ are independent. Though these driving Brownian motions, $\{\W_{1}(t)\}_{t\ge0},\dots,\{\W_{M}(t)\}_{t\ge0}$, are independent, the trajectories, $\{\X_{1}(t)\}_{t\ge0},\dots,\{\X_{M}(t)\}_{t\ge0}$, are nonetheless correlated since they all experience the same changing environment (that is, the same $J(t)$).

We note that for each $m\in\{1,\dots,M\}$, the pair $(\X_{m}(t),J(t))$ is a strong Markov process\footnote{A stochastic process is a Markov process if the conditional probability distribution of future states of the process (conditioned on both past and present states) depends only upon the present state, not on the sequence of states that preceded it. A strong Markov process is similar to a Markov process, except that the  ``present'' is defined in terms of a stopping time.} and is commonly known as a hybrid switching diffusion (see the books \cite{Yin10} and \cite{mao06} for more information about hybrid switching diffusions).

The following theorem gives the survival probability of at least one of the $M$ particles.

\begin{theorem}[Survival probability]\label{theorem survival}
Let $M$ be a positive integer. For each $\M\in\{1,\dots,M\}$, assume that the functions $\{p^{\M}_{i}(\x_{1},\dots,\x_{\M},t)\}_{i\in I}$
\begin{align*}
p^{\M}_{i}:\bar{U}^{\M}\times[0,\infty)\to[0,1]
\end{align*}
are continuously differentiable in $t$ and twice continuously differentiable in their other argument and satisfy the PDE
\begin{align}\label{430p}
\begin{split}
\partial_{t}p_{i}^{\M}&=\mathbb{L}^{\M}p_{i}^{\M}\quad\text{for }(\x_{1},\dots,\x_{\M},t)\in U^{\M}\times(0,\infty),
\end{split}
\end{align}
where $\mathbb{L}^{\M}$ is the operator
\begin{align}\label{bb}
\mathbb{L}^{\M}p_{i}^{\M}=\sum_{m=1}^{\M} \L^{m}_{i}p_{i}^{\M}+\sum_{j\in I} q_{ij}p^{\M}_j,
\end{align}
where $q_{ij}$ is the $(i,j)$-th entry of the generator $Q$ of $J(t)$, and $\L^{m}_{i}$ is the following differential operator acting on the $m$-th spatial variable $\x_{m}$
\begin{align*}
\L^{m}_{i}p_{i}^{\M} & = \sum_{j=1}^{d}(\bbb_{i})_{j}(\x_{m})\partial_{(\x_{m})_{j}}p_{i}^{\M}+\frac12\sum_{l,j=1}^{d}(\sigma_{i}\sigma_{i}^{T})_{l,j}(\x_{m})\partial_{(\x_{m})_{l},(\x_{m})_{j}}p_{i}^{\M}.
\end{align*}

Assume that the following boundary conditions are satisfied for each $\M\in\{1,\dots,M\}$
\begin{alignat}{2}
&\partial_{\n_{m}}p_{i}^{\M}(\x_{1},\dots,\x_{\M},t)=0,\quad &&\x_{m}\in\Gamma_{i}^{\text{ref}},\label{431p}\\
&p_{i}^{\M}(\x_{1},\dots,\x_{\M},t)=p_{i}^{\M-1}(\x_{1},\dots,\x_{m-1},\x_{m+1},\dots,\x_{\M},t),\quad &&\x_{m}\in\Gamma_{i}^{\text{abs}},\label{431p2}
\end{alignat}
where $\partial_{\n_{m}}$ denotes the normal derivative with respect to the $m$-th spatial variable $\x_{m}$ and $p_{i}^{0}\equiv0$ for each $i$.

Assume that the following initial conditions are satisfied for each $\M\in\{1,\dots,M\}$
\begin{align}\label{p initial}
\begin{split}
p_{i}^{\M}(\x_{1},\dots,\x_{\M},0)=1\quad\text{for }(\x_{1},\dots,\x_{\M})\in U^{\M}.
\end{split}
\end{align}

Then $p_{j}^{M}(\x_{1},\dots,\x_{M},t)$ gives the probability that at least one of the $M$ particles is still in the interior of the domain at time $t$ given that they start at positions $\x_{1},\dots,\x_{M}$ and $J(0)=j$. That is, $p^{M}_j(\x_{1},\dots,\x_{M},t)$ is equal to
\begin{align*}
\P\big(\cup_{m=1}^{M}\{\X_{m}(t)\in U\}\,\big|\, \X_{1}(0)=\x_{1},\dots,\X_{M}(0)=\x_{M},\,J(0)=j\big).
\end{align*}
\end{theorem}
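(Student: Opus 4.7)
The plan is to proceed by induction on $M$, combining an It\^o--Dynkin argument for the reflected hybrid switching diffusion with the strong Markov property applied at the first absorption time. The base case $M=0$ is immediate from the convention $p_{i}^{0}\equiv 0$. For the inductive step, fix $(\x_{1},\ldots,\x_{M})\in U^{M}$ and $j\in I$, start the joint process with $\X_{m}(0)=\x_{m}$ and $J(0)=j$, and let $\tau^{\min}:=\min_{m}\tau_{m}$ denote the first absorption time among the $M$ particles.

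The key object is the process
\begin{equation*}
Z(s):= p_{J(s)}^{M}\bigl(\X_{1}(s),\ldots,\X_{M}(s),\,t-s\bigr),\qquad s\in[0,\tau^{\min}\wedge t].
\end{equation*}
An It\^o formula for reflected hybrid switching diffusions (cf.\ \cite{Yin10}) writes $dZ(s)$ as a drift term plus a jump-compensator term plus local-time contributions in each $dK_{m}(s)$ plus martingale differences. The drift and compensator combine to $\bigl(-\partial_{t}+\sum_{m}\L_{J(s)}^{m}+\sum_{k}q_{J(s)k}\bigr)p_{J(s)}^{M}$ evaluated at $(\X_{1}(s),\ldots,\X_{M}(s),t-s)$, which vanishes by the PDE~(\ref{430p}). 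For $s<\tau^{\min}$, every boundary visit $\X_{m}(s)\in\partial U$ must lie in $\Gamma_{J(s)}^{\text{ref}}$ (otherwise $\tau^{\min}$ would already have occurred), so the $dK_{m}(s)$ coefficient $\partial_{\n_{m}}p_{J(s)}^{M}$ vanishes by~(\ref{431p}). Hence $Z$ is a bounded martingale on $[0,\tau^{\min}\wedge t]$ and optional stopping gives $p_{j}^{M}(\x_{1},\ldots,\x_{M},t)=\E[Z(\tau^{\min}\wedge t)]$.

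Next I would split this expectation according to whether $\tau^{\min}\geq t$ or $\tau^{\min}<t$. On $\{\tau^{\min}\geq t\}$ every $\X_{m}(t)$ lies in $U$, so the initial condition~(\ref{p initial}) forces $Z(t)=1$, while the event $\bigcup_{m}\{\X_{m}(t)\in U\}$ likewise holds. On $\{\tau^{\min}<t\}$, almost surely exactly one particle $m^{\ast}$ is absorbed at $\tau^{\min}$, and applying (\ref{431p2}) at $\X_{m^{\ast}}(\tau^{\min})\in\Gamma_{J(\tau^{\min})}^{\text{abs}}$ yields
\begin{equation*}
Z(\tau^{\min})=p_{J(\tau^{\min})}^{M-1}\bigl(\X_{1}(\tau^{\min}),\ldots,\widehat{\X_{m^{\ast}}(\tau^{\min})},\ldots,\X_{M}(\tau^{\min}),\,t-\tau^{\min}\bigr),
\end{equation*}
where the hat denotes omission. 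By the strong Markov property at $\tau^{\min}$ and the inductive hypothesis for the remaining $M-1$ particles, this equals $\P\bigl(\bigcup_{k\neq m^{\ast}}\{\X_{k}(t)\in U\}\,\big|\,\F_{\tau^{\min}}\bigr)$, which coincides with $\P(\bigcup_{m}\{\X_{m}(t)\in U\}\mid\F_{\tau^{\min}})$ since $\X_{m^{\ast}}(t)=\X_{m^{\ast}}(\tau^{\min})\in\partial U$ for $t\geq\tau^{\min}$. Taking expectations and recombining the two cases produces the advertised identification of $p_{j}^{M}$ with the survival probability.

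The main technical obstacle is justifying the It\^o formula for the multi-particle reflected hybrid switching diffusion at the regularity actually assumed in the theorem, in particular checking that the local-time terms vanish near any lower-dimensional interface where $\Gamma_{i}^{\text{ref}}$ meets $\Gamma_{i}^{\text{abs}}$. A secondary subtle point is ruling out simultaneous absorption at $\tau^{\min}$: because $\{\W_{m}\}$ are mutually independent and independent of $J$, each $\tau_{m}$ has a continuous marginal law, and the countable set of $J$-switch times almost surely misses the Lebesgue-null time set $\{s:\X_{m}(s)\in\partial U\}$, so with probability one exactly one particle crosses into $\Gamma^{\text{abs}}$ at $\tau^{\min}$.
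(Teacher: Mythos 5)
Your proof is correct and follows essentially the same route as the paper's: induction on $M$, the generalized It\^o formula for the reflected hybrid switching diffusion to show that the PDE and reflecting conditions make $p^{M}_{J(s)}(\X_{1}(s),\dots,\X_{M}(s),t-s)$ a martingale up to $\tau^{\min}\wedge t$, then a split on whether the first absorption occurs before $t$, handled via the initial condition, the absorbing boundary condition, the strong Markov property, and the inductive hypothesis. The only cosmetic differences are that you anchor the induction at $M=0$ rather than $M=1$ and add an (unneeded for the argument) discussion of ruling out simultaneous absorption.
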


\begin{remark}
{\rm In matrix notation, the PDE in (\ref{430p}) is
\begin{align*}
\partial_{t}{\bf p}=(L+Q){\bf p},
\end{align*}
where ${\bf p}$ is the vector with $i$-th component $p_{i}^{\M}$, $L$ is the diagonal matrix with $i$-th diagonal entry $\sum_{m=1}^{\M}\L_{i}^{m}$ and $Q$ is the generator of $J(t)$. Similar statements hold for the PDEs in Theorems~\ref{theorem split} and \ref{theorem mfpt} below.}
\end{remark}

\begin{remark}
{\rm If we make (\ref{431p2}) an absorbing condition, then the solution $p^{M}_{j}$ gives the probability that \emph{all} $M$ particles are still in the interior of the domain at time $t$.}
\end{remark}

\begin{remark}
{\rm If instead of a switching environment, we impose that each particle switches independently, then calculating joint statistics requires only solving a single BVP on $U$, instead of the hierarchy of $M$ BVPs on $U$, $U^{2}$,\dots, $U^{M}$ given in the theorem above.}
\end{remark}

\begin{proof}
We prove the theorem by induction on the number of particles, so we first consider the base case of $M=1$. Let $S>0$, define $q(\x,t,j):=p_{j}^{1}(\x,S-t)$, and let $\E_{0}$ denote expectation conditioned on $\X_{1}(0)=\x_{1}$ and $J(0)=j$. The generalized Ito formula\footnote{Ito's formula is a fundamental result in stochastic analysis and is the stochastic counterpart to the chain rule \cite{oksendal03}. Here, we use the generalized Ito formula which applies to SDEs with random switching. For for information, see Lemma 3 on p.\ 104 of \cite{skorokhod89} or Lemma 1.9 on p.\ 49 of \cite{mao06}.} gives
\begin{align}\label{ito p}
\begin{split}
&\E_{0}\Big[q(\X_{1}(S\w\tau_{1}),S\w\tau_{1},J(S\w\tau_{1}))\Big] - q(\x_{1},0,j)\\
& = \E_{0}\Big[\int_0^{S\w\tau_{1}} (\partial_{t}+\mathbb{L}^{1})q(\X_{1}(s),s,J(s))\,ds \Big]
-\E_{0}\Big[\int_0^{S\w\tau_{1}} \partial_{\n} q(\X_{1}(s),s,J(s))\,dK_{1}(s) \Big].
\end{split}
\end{align}
The PDE in (\ref{430p}), the no flux boundary conditions in (\ref{431p}), and the definitions of $\tau_{1}$ and $q$ ensure that the righthand side of (\ref{ito p}) is zero. Hence,
\begin{align}\label{q equation}
q(\x_{1},0,j)=\E_{0}\big[q(\X_{1}(S\w\tau_{1}),S\w\tau_{1},J(S\w\tau_{1}))\big].
\end{align}
Recalling the definition of $q$, equation~(\ref{q equation}) becomes
\begin{align}\label{72}
p_{j}^{1}(\x_{1},S)
&=\E_{0}\big[p_{J(S)}^{1}(\X_{1}(S),0)1_{S<\tau_{1}}\big]
+\E_{0}\big[p_{J(\tau_{1})}^{1}(\X_{1}(\tau_{1}),S-\tau_{1})1_{\tau_{1}\le S}\big],
\end{align}
where $1_{A}$ denotes the indicator function on an event $A$. By definition of $\tau_{1}$, we have that $\X_{1}(\tau_{1})\in\partial U$ and thus by the boundary condition in (\ref{431p2}) we have that the second term in the righthand side of (\ref{72}) is zero. Therefore, by the initial condition in (\ref{p initial}) and the definition of $\tau_{1}$, we have that (\ref{72}) becomes
\begin{align*}
p_{j}^{1}(\x_{1},S)=\P(\X_{1}(S)\in U\,|\,\X_{1}(0)=\x_{1},J(0)=j),
\end{align*}
which completes the proof for $M=1$.

Now suppose $M\ge2$ and let $\tau$ be the time that the first particle exits the domain
\begin{align*}
\tau:=\inf_{1\le m\le M}\tau_{m}.
\end{align*}
Let $S>0$, define $q(\y_{1},\dots,\y_{M},t,j):=p_{j}^{M}(\y_{1},\dots,\y_{M},S-t)$, and let $\E_{0}$ denote expectation conditioned on $\X_{1}(0)=\x_{1}$, \dots, $\X_{M}(0)=\x_{M}$, and $J(0)=j$. Again by the generalized Ito formula we have that
\begin{align}\label{ito2p}
\begin{split}
&\E_{0}\Big[q(\X_{1}(S\w\tau),\dots,\X_{M}(S\w\tau),S\w\tau,J(S\w\tau))\Big] - q(\x_{1},\dots,\x_{M},0,j) \\
& \quad= \E_{0}\Big[\int_0^{S\w\tau} (\partial_{t}+\mathbb{L}^{M})q(\X_{1}(s),\dots,\X_{M}(s),s,J(s))\,ds \Big] 
\\
&\quad\quad- \sum_{m=1}^{M}\E_{0}\Big[\int_0^{S\w\tau} \partial_{\n_{m}} q(\X_{1}(s),\dots,\X_{M}(s),s,J(s))\,dK_{m}(s) \Big].
\end{split}
\end{align}
As before, by (\ref{430p}), (\ref{431p}), and the definitions of $\tau$ and $q$, we have that the righthand side of (\ref{ito2p}) is zero and thus
\begin{align}\label{194}
q(\x_{1},\dots,\x_{M},0,j)=\E_{0}\Big[q(\X_{1}(S\w\tau),\dots,\X_{M}(S\w\tau),S\w\tau,J(S\w\tau))\Big].
\end{align}
Recalling the definition of $q$, equation~(\ref{194}) becomes
\begin{align}
\begin{split}\label{s}
p_{j}^{M}(\x_{1},\dots,\x_{M},S)
&=\E_{0}\Big[p^{M}_{J(S)}(\X_{1}(S),\dots,\X_{M}(S),0)1_{S<\tau}\Big]\\
&\qquad+\E_{0}\Big[p^{M}_{J(\tau)}(\X_{1}(\tau),\dots,\X_{M}(\tau),S-\tau)1_{\tau\le S}\Big].
\end{split}
\end{align}
By the initial condition in (\ref{p initial}) and the definition of $\tau$, the first term on the righthand side of (\ref{s}) is the probability that none of the $M$ particles exit before time $S$
\begin{align*}
\P\big(\cap_{m=1}^{M}\{\X_{m}(t)\in U\}\,\big|\, \X_{1}(0)=\x_{1},\dots,\X_{M}(0)=\x_{M},\,J(0)=j\big).
\end{align*}
By (\ref{431p2}), the inductive hypothesis, and the strong Markov property, the second term is the probability that the number of particles that exit before time $S$ is between 1 and $M-1$. Summing these two terms completes the proof.
\end{proof}

The following theorem gives the probability that all $M$ particles exit the domain through the same piece of the boundary.

\begin{theorem}[Hitting probability]\label{theorem split}
Let $M$ be a positive integer. For each $\M\in\{1,\dots,M\}$, assume that the functions $\{\pi^{\M}_{i}(\x_{1},\dots,\x_{\M})\}_{i\in I}$
\begin{align*}
\pi^{\M}_{i}:\bar{U}^{\M}\to[0,1]
\end{align*}
are twice continuously differentiable solutions to the PDE
\begin{align}\label{4302}
\begin{split}
0&=\mathbb{L}^{\M}\pi_{i}^{\M}\quad\text{for }(\x_{1},\dots,\x_{\M})\in U^{\M},
\end{split}
\end{align}
where $\mathbb{L}^{\M}$ is the operator defined in (\ref{bb}).

For some given $\Gamma_{0}\subset\partial U$, assume that the following boundary conditions are satisfied for each $\M\in\{1,\dots,M\}$
\begin{alignat}{2}
&\partial_{\n_{m}}\pi_{i}^{\M}(\x_{1},\dots,\x_{\M})=0,\quad &&\x_{m}\in\Gamma^{\text{ref}}_{i},\label{4312}\\
&\pi_{i}^{\M}(\x_{1},\dots,\x_{\M})=0,\quad &&\x_{m}\in\Gamma^{\text{abs}}_{i}\slash\Gamma_{0},\label{43122}\\
\pi_{i}^{\M}&(\x_{1},\dots,\x_{\M})=\pi_{i}^{\M-1}(\x_{1},\dots,\x_{m-1},\x_{m+1},\dots,\x_{\M}),\quad &&\x_{m}\in\Gamma^{\text{abs}}_{i}\cap\Gamma_{0},\label{43123}
\end{alignat}
where $\partial_{\n_{m}}$ denotes the normal derivative with respect to the $m$-th spatial variable $\x_{m}$, and $\pi_{i}^{0}\equiv1$ for each $i$.

Then $\pi_{j}^{M}(\x_{1},\dots,\x_{M})$ gives the probability that all $M$ particles exit through $\Gamma_{0}\subset\partial U$ given that they start at positions $\x_{1},\dots,\x_{M}$ and $J(0)=j$. That is, $\pi^{M}_j(\x_{1},\dots,\x_{M})$ is equal to
\begin{align*}
\P\big(\cap_{m=1}^{M}\big\{\lim_{t\to\infty}\X_{m}(t)\in\Gamma_{0}\big\}\big|\, \X_{1}(0)=\x_{1},\dots,\X_{M}(0)=\x_{M},\,J(0)=j\big).
\end{align*}
\end{theorem}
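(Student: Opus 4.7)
The plan is to mirror the proof of Theorem~\ref{theorem survival}, proceeding by induction on $M$ and applying the generalized It\^o formula, with two adaptations: since $\pi_{i}^{\M}$ is time-independent, the $\partial_{t}$ term drops out and the PDE~(\ref{4302}) alone kills the drift contribution; and the absorbing boundary condition is now split into a zero condition~(\ref{43122}) off $\Gamma_{0}$ and a reduction~(\ref{43123}) on $\Gamma_{0}$, so that the hitting event is encoded directly in the boundary values of $\pi$.

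For the base case $M=1$, I would set $q(\x,j):=\pi_{j}^{1}(\x)$ and apply the generalized It\^o formula to $q(\X_{1}(t),J(t))$ stopped at $S\w\tau_{1}$ for some $S>0$, with $\E_{0}$ denoting expectation conditioned on $\X_{1}(0)=\x_{1}$ and $J(0)=j$. The drift integral $\E_{0}\big[\int_{0}^{S\w\tau_{1}}\mathbb{L}^{1}q(\X_{1}(s),J(s))\,ds\big]$ vanishes by (\ref{4302}), and the local-time integral $\E_{0}\big[\int_{0}^{S\w\tau_{1}}\partial_{\n}q(\X_{1}(s),J(s))\,dK_{1}(s)\big]$ vanishes by (\ref{4312}), leaving
\begin{align*}
\pi_{j}^{1}(\x_{1}) = \E_{0}\bigl[\pi_{J(S\w\tau_{1})}^{1}(\X_{1}(S\w\tau_{1}))\bigr].
\end{align*}
Since $\tau_{1}<\infty$ almost surely and $\pi^{1}$ is $[0,1]$-valued, dominated convergence as $S\to\infty$ yields $\pi_{j}^{1}(\x_{1})=\E_{0}\bigl[\pi_{J(\tau_{1})}^{1}(\X_{1}(\tau_{1}))\bigr]$. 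At the exit time, $\X_{1}(\tau_{1})\in\Gamma_{J(\tau_{1})}^{\text{abs}}$, so by (\ref{43122}), (\ref{43123}), and $\pi_{i}^{0}\equiv1$ the integrand equals $1_{\X_{1}(\tau_{1})\in\Gamma_{0}}$, which (since $\X_{1}$ is frozen at $\X_{1}(\tau_{1})$ after $\tau_{1}$) gives the claimed hitting probability.

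For the inductive step with $M\ge2$, I would let $\tau:=\min_{1\le m\le M}\tau_{m}$ and apply the generalized It\^o formula to $\pi_{J(t)}^{M}(\X_{1}(t),\dots,\X_{M}(t))$ stopped at $S\w\tau$. Exactly as in Theorem~\ref{theorem survival}, (\ref{4302}) kills the drift integral and the $M$ Neumann conditions (\ref{4312}) kill all $M$ local-time integrals; letting $S\to\infty$ via dominated convergence yields
\begin{align*}
\pi_{j}^{M}(\x_{1},\dots,\x_{M}) = \E_{0}\bigl[\pi_{J(\tau)}^{M}(\X_{1}(\tau),\dots,\X_{M}(\tau))\bigr].
\end{align*}
At time $\tau$, exactly one particle, say the $k$-th, lies on $\Gamma_{J(\tau)}^{\text{abs}}$. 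On the event $\{\X_{k}(\tau)\in\Gamma_{J(\tau)}^{\text{abs}}\setminus\Gamma_{0}\}$, (\ref{43122}) makes the integrand zero, corresponding to the joint hitting event having already failed. On $\{\X_{k}(\tau)\in\Gamma_{J(\tau)}^{\text{abs}}\cap\Gamma_{0}\}$, (\ref{43123}) collapses $\pi_{J(\tau)}^{M}$ to $\pi_{J(\tau)}^{M-1}$ evaluated at the remaining $M-1$ particle positions; applying the strong Markov property of the joint hybrid switching diffusion $(\X_{1},\dots,\X_{M},J)$ at $\tau$ together with the inductive hypothesis shows this contribution equals the probability that particle $k$ exits through $\Gamma_{0}$ and the remaining $M-1$ particles subsequently all exit through $\Gamma_{0}$. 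Summing over the identity of the first-exiting particle recovers the joint event $\cap_{m=1}^{M}\{\lim_{t\to\infty}\X_{m}(t)\in\Gamma_{0}\}$. The main delicate points are the $S\to\infty$ passage (handled by $\tau<\infty$ a.s.\ and the uniform bound $\pi^{\M}\in[0,1]$) and the invocation of the strong Markov property for the joint process, which is valid because the driving Brownian motions $\W_{1},\dots,\W_{M}$ are independent and coupled only through the common environment $J$.
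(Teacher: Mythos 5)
Your proposal is correct and follows essentially the same route as the paper's proof: induction on $M$, the generalized It\^o formula with the drift killed by the PDE and the local-time terms killed by the reflecting conditions, passage to the limit in time, and then the absorbing boundary conditions together with the strong Markov property and the inductive hypothesis. You supply slightly more detail than the paper does on the $S\to\infty$ limit (dominated convergence via $\tau<\infty$ a.s.\ and $\pi^{\M}\in[0,1]$), but the argument is the same.
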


\begin{proof}
We prove the theorem by induction on the number of particles, so we first consider the base case of $M=1$. Denote $\pi^{1}_{j}(\x)$ by $\pi(\x,j)$ and let $\E_{0}$ denote expectation conditioned on $\X_{1}(0)=\x_{1}$ and $J(0)=j$. By the generalized Ito formula, we have that
\begin{align}\label{ito}
\begin{split}
&\E_{0}\big[\pi(\X_{1}(t\w\tau_{1}),J(t\w\tau_{1}))\big] - \pi(\x_{1},j)\\
& = \E_{0}\Big[\int_0^{t\w\tau_{1}} \mathbb{L}^{1}\pi(\X_{1}(s),J(s))\,ds \Big]
 - \E_{0}\Big[\int_0^{t\w\tau_{1}} \partial_{\n} \pi(\X_{1}(s),J(s))\,dK_{1}(s) \Big].
\end{split}
\end{align}
The PDE in (\ref{4302}), the no flux boundary conditions in (\ref{4312}), and the definition of $\tau_{1}$ ensure that the righthand side of (\ref{ito}) is zero. Hence,
\begin{align*}
\pi(\x_{1},j)=\E_{0}\big[\pi(\X_{1}(t\w\tau_{1}),J(t\w\tau_{1}))\big].
\end{align*}
Taking $t\to\infty$ and consulting (\ref{43122}) and (\ref{43123}) completes the proof for $M=1$.

Now suppose $M\ge2$ and let $\tau$ be the time that the first particle exits the domain
\begin{align*}
\tau:=\inf_{1\le m\le M}\tau_{m}.
\end{align*}
Denote $\pi^{M}_{j}(\y_{1},\dots,\y_{M})$ by $\pi(\y_{1},\dots,\y_{M},j)$ and let $\E_{0}$ denote expectation conditioned on $\X_{1}(0)=\x_{1}$, \dots, $\X_{M}(0)=\x_{M}$, and $J(0)=j$. Again by the generalized Ito formula we have that
\begin{align}\label{ito2}
\begin{split}
&\E_{0}\big[\pi(\X_{1}(t\w\tau),\dots,\X_{M}(t\w\tau),J(t\w\tau))\big] - \pi(\x_{1},\dots,\x_{M},j) \\
& \quad= \E_{0}\Big[\int_0^{t\w\tau} \mathbb{L}^{M}\pi(\X_{1}(s),\dots,\X_{M}(s),J(s))\,ds \Big] 
\\
&\quad\quad- \sum_{m=1}^{M}\E_{0}\Big[\int_0^{t\w\tau} \partial_{\n_{m}} \pi(\X_{1}(s),\dots,\X_{M}(s),J(s))\,dK_{m}(s) \Big].
\end{split}
\end{align}
As before, by (\ref{4302}), (\ref{4312}), and the definition of $\tau$, we have that (\ref{ito2}) becomes
\begin{align*}
\pi(\x_{1},\dots,\x_{M},j)=\E_{0}\big[\pi(\X_{1}(t\w\tau),\dots,\X_{M}(t\w\tau),J(t\w\tau))\big].
\end{align*}
Taking $t\to\infty$, consulting (\ref{43122}) and (\ref{43123}), and using the strong Markov property and the inductive hypothesis completes the proof.
\end{proof}

A similar argument gives the mean first passage time of the last particle to exit. 
\begin{theorem}[MFPT of last particle]\label{theorem mfpt}
Let $M$ be a positive integer. For each $\M\in\{1,\dots,M\}$, assume that the functions $\{w^{\M}_{i}(\x_{1},\dots,\x_{\M})\}_{i\in I}$
\begin{align*}
w^{\M}_{i}:\bar{U}^{\M}\to[0,\infty)
\end{align*}
are twice continuously differentiable solutions to the PDE\begin{align*}
\begin{split}
-1&=\mathbb{L}^{\M}w_{i}^{\M}\quad\text{for }(\x_{1},\dots,\x_{\M})\in U^{\M},
\end{split}
\end{align*}
where $\mathbb{L}^{\M}$ is the operator defined in (\ref{bb}).

Assume that the following boundary conditions are satisfied for each $\M\in\{1,\dots,M\}$
\begin{alignat*}{2}
\partial_{\n_{m}}w_{i}^{\M}(\x_{1},\dots,\x_{\M})&=0,\quad &&\x_{m}\in\Gamma_{i}^{\text{ref}},\\
w_{i}^{\M}(\x_{1},\dots,\x_{\M})&=w_{i}^{\M-1}(\x_{1},\dots,\x_{m-1},\x_{m+1},\dots,\x_{\M}),\quad &&\x_{m}\in\Gamma_{i}^{\text{abs}},
\end{alignat*}
where $\partial_{\n_{m}}$ denotes the normal derivative with respect to the $m$-th spatial variable $\x_{m}$ and $w_{i}^{0}\equiv0$ for each $i$.

Then $w_{j}^{M}(\x_{1},\dots,\x_{M})$ gives the mean first passage time of the last of $M$ particles to exit the domain given that they start at positions $\x_{1},\dots,\x_{M}$ and $J(0)=j$. That is, if $\S:=\sup_{1\le m\le M}\tau_{m}$, then
\begin{align*}
w^{M}_j(\x_{1},\dots,\x_{M})
&\;=\E[\S|\, \X_{1}(0)=\x_{1},\dots,\X_{M}(0)=\x_{M},\,J(0)=j].
\end{align*}
\end{theorem}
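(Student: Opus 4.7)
The plan is to mirror the proof of Theorem~\ref{theorem split}, replacing the harmonic function $\pi$ with $w$ (which satisfies $\mathbb{L}^{\M}w^{\M}_i=-1$ instead of $\mathbb{L}^{\M}\pi^{\M}_i=0$), and inducting on the number of particles $M$. The extra constant $-1$ on the right-hand side will produce a time integral via the generalized Ito formula, which is precisely what delivers the mean first passage time.

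For the base case $M=1$, fix $j\in I$ and let $\E_0$ denote expectation conditioned on $\X_1(0)=\x_1$, $J(0)=j$. Apply the generalized Ito formula to $w^1_{\cdot}(\cdot)$ evaluated at $(\X_1(t\w\tau_1),J(t\w\tau_1))$. The PDE gives integrand $-1$, so
\begin{align*}
\E_0\big[w^1_{J(t\w\tau_1)}(\X_1(t\w\tau_1))\big]-w^1_j(\x_1)=-\E_0[t\w\tau_1]-\E_0\Big[\int_0^{t\w\tau_1}\partial_{\n}w^1_{J(s)}(\X_1(s))\,dK_1(s)\Big].
\end{align*}
The local time integral vanishes by the reflecting boundary condition on $\Gamma^{\text{ref}}_i$, since $K_1$ only grows when $\X_1\in\partial U$. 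Rearranging and letting $t\to\infty$, the dominated/monotone convergence arguments (using $\tau_1<\infty$ almost surely) and the absorbing boundary condition $w^1_i=w^0_i\equiv0$ on $\Gamma^{\text{abs}}_i$ yield $w^1_j(\x_1)=\E_0[\tau_1]$, as desired.

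For the inductive step, assume the conclusion holds for $M-1$ particles. Define the first exit time $\tau:=\inf_{1\le m\le M}\tau_m$ and let $\E_0$ denote expectation conditioned on $\X_m(0)=\x_m$ for $m=1,\dots,M$ and $J(0)=j$. Applying the generalized Ito formula to $w^M_{\cdot}(\cdot)$ evaluated at $(\X_1(t\w\tau),\dots,\X_M(t\w\tau),J(t\w\tau))$ and invoking the PDE, the reflecting boundary conditions, and the definition of $\tau$ gives
\begin{align*}
w^M_j(\x_1,\dots,\x_M)=\E_0\big[w^M_{J(t\w\tau)}(\X_1(t\w\tau),\dots,\X_M(t\w\tau))\big]+\E_0[t\w\tau].
\end{align*}
Now let $t\to\infty$. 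On $\{\tau<\infty\}$ (which is a probability one event by assumption), exactly one of the particles, say the $k$-th, lies on $\Gamma^{\text{abs}}_{J(\tau)}$ at time $\tau$, so the boundary condition identifies $w^M_{J(\tau)}$ with $w^{M-1}_{J(\tau)}$ evaluated at the other $M-1$ particle positions. By the strong Markov property of the hybrid switching diffusion $(\X_1,\dots,\X_M,J)$ and the inductive hypothesis, this expression is the conditional expected time until the remaining $M-1$ particles have all exited. Adding $\E_0[\tau]$, which is the time already elapsed, produces $\E_0[\S]$, completing the induction.

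The main obstacle I foresee is the passage to the limit $t\to\infty$: one must justify that $\E_0[t\w\tau]\uparrow \E_0[\tau]$ and that $\E_0[w^M_{J(t\w\tau)}(\X_1(t\w\tau),\dots,\X_M(t\w\tau))]$ converges to $\E_0[w^{M-1}_{J(\tau)}(\dots)]$, which uses almost sure finiteness of the $\tau_m$ together with an appropriate integrability bound on the $w^{\M}_i$. Once those technicalities are handled (using the assumed twice continuous differentiability, hence boundedness on $\bar{U}^{\M}$, of $w^{\M}_i$ for each $\M<M$), the argument proceeds exactly as in Theorem~\ref{theorem split}, with the additional $\E_0[t\w\tau]$ term arising from the inhomogeneity $-1$ in the PDE.
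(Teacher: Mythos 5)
Your proposal is correct and is exactly the argument the paper intends: the paper's proof of this theorem is literally the one-line remark that it is analogous to the proof of Theorem~\ref{theorem split}, and you have carried out that analogy faithfully — the inhomogeneity $-1$ in the PDE produces the $\E_0[t\w\tau]$ term via the generalized Ito formula, the reflecting conditions kill the local-time integrals, and the absorbing coupling plus the strong Markov property and induction deliver $\E_0[\S]$. The limiting technicalities you flag are handled just as in the paper's Theorem~\ref{theorem split} (boundedness of the continuous $w_i^{\M}$ on the compact $\bar{U}^{\M}$ and almost sure finiteness of the $\tau_m$), so nothing further is needed.
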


\begin{proof}
The proof is analogous to the proof of Theorem~\ref{theorem split}.
\end{proof}

\subsection{SDE example}\label{SDE examples}

\begin{example}[Gated target versus gated ligands]\label{particle example} {\rm As some cellular reactions depend on the arrival of diffusing ligands to small targets, many works seek to calculate the mean first passage time of a diffusing particle to a small target (the so-called ``narrow escape problem'' \cite{cheviakov10, Holcman14, lindsay15, pillay10, schuss07}). If, however, the diffusing ligands or the target change conformational state and reaction is only possible in some states, then the theory must be adjusted \cite{Bressloff15b, Bressloff15c, Bressloff15d}. Indeed, such reactions (known as ``gated'' reactions) occur in a number of biological and biochemical contexts, including medical therapies that block chemical reactions \cite{spouge96}, diffusing enzymes that switch between an active and an inactive state, the binding of a transcription factor to a DNA promoter \cite{barrandon08}, 
and the diffusion of ions through stochastically gated channels \cite{Newby13}. Intermittent search processes can also fit into this framework \cite{Benichou11}. 

The situation gets more interesting if there are multiple diffusing ligands. If there is only one diffusing ligand, then it does not matter if it is the state of the ligand or of the target that determines the possibility of reaction. However, if there is more than one ligand, then these two cases become significantly different. This difference was first pointed out in \cite{zhou96} and further investigated in \cite{Berezhkovskii96, Makhnovskii98}. The key difference is that if the target changes state, then all the ligands become correlated even though they move independently. Our theorems in section~\ref{section particle} for multiple diffusing particles apply to this more delicate case.

As a prototype model, consider $M$ non-interacting ligands that move by pure diffusion in the interval $[0,L]$ with an absorbing boundary condition at $x=0$. Suppose that each ligand can bind to a stationary protein at $x=L$ if the protein is in the proper conformational state. Suppose that the state of the protein is determined by a continuous-time Markov jump process $J(t)\in\{0,1\}$. State 0 is the binding state, and thus all $M$ ligands have an absorbing condition at $x=L$ if $J(t)=0$. If $J(t)=1$, then all $M$ ligands reflect at $x=L$. 

Though the ligands are non-interacting, they are nonetheless correlated because they all diffuse in the presence of the same switching protein. Calculating joint statistics for the $M$ ligands requires solving a hierarchy of $M$ BVPs on the hypercubes $[0,L],[0,L]^{2},\dots,$ and $[0,L]^{M}$, where the BVPs couple to each other at the boundaries.  To illustrate, suppose the ligands begin at positions $x_{1},\dots,x_{M}$ and $J(0)=0$. By Theorem~\ref{theorem survival}, if $Q$ is the generator of $J(t)$, then the probability that the $m$-th ligand has not been absorbed by time $t$ is given by $p_{0}^{(1)}(x_{m},t)$, where
\begin{align*}
p^{(1)}_{0}:[0,L]\times[0,\infty)\to[0,1]
\quad\text{and}\quad
p^{(1)}_{1}:[0,L]\times[0,\infty)\to[0,1]
\end{align*}
satisfy
\begin{align}
&\partial_t\begin{pmatrix}p^{(1)}_{0}\\p^{(1)}_{1}\end{pmatrix}=D\Delta\begin{pmatrix}p^{(1)}_{0}\\p^{(1)}_{1}\end{pmatrix}+Q\begin{pmatrix}p^{(1)}_{0}\\p^{(1)}_{1}\end{pmatrix},\quad y\in(0,L)\text{, }t>0,\label{one equation}\\
&p^{(1)}_{0}(0,t)=p^{(1)}_{1}(0,t)=p^{(1)}_{0}(L,t)=\partial_{x}p^{(1)}_{1}(L,t)=0,\quad t>0,\label{one boundary}\\
&p^{(1)}_{0}=p^{(1)}_{1}=1,\quad y\in(0,1)\text{, }t=0.\label{one initial}
\end{align}

Using Theorem~\ref{theorem survival} again, the probability that the either the $m$-th or the $n$-th ligand (or both) has not been absorbed by time $t$ is given by $p_{0}^{(2)}(x_{m},x_{n},t)$, where
\begin{align*}
p^{(2)}_{0}:[0,L]^{2}\times[0,\infty)\to[0,1]
\quad\text{and}\quad
p^{(2)}_{1}:[0,L]^{2}\times[0,\infty)\to[0,1]
\end{align*}
satisfy
\begin{align*}
&\partial_t\begin{pmatrix}p^{(2)}_{0}\\p^{(2)}_{1}\end{pmatrix}=D\Delta\begin{pmatrix}p^{(2)}_{0}\\p^{(2)}_{1}\end{pmatrix}+Q\begin{pmatrix}p^{(2)}_{0}\\p^{(2)}_{1}\end{pmatrix},\quad (y_{1},y_{2})\in(0,L)\times(0,L)\text{, }t>0,
\end{align*}
with some absorbing and reflecting boundary conditions
\begin{align*}
p^{(2)}_{0}(0,y_{2},t)=p^{(2)}_{1}(0,y_{2},t)=p^{(2)}_{1}(y_{1},0,t)=p^{(2)}_{0}(y_{1},0,t)=0,\\
\partial_{y_{1}}p^{(2)}_{1}(L,y_{2},t)=\partial_{y_{2}}p^{(2)}_{1}(y_{1},L,t)=0,
\end{align*}
and boundary conditions that couple to $p_{0}^{(1)}$
\begin{align*}
p^{(2)}_{0}(L,y_{2},t)=p^{(1)}_{0}(y_{2},t),
\quad\text{and}\quad
p^{(2)}_{0}(y_{1},L,t)=p^{(1)}_{0}(y_{1},t),
\end{align*}
and initial conditions given in (\ref{one initial}).

Continuing in this manner, Theorem~\ref{theorem survival} gives that the probability that at least one of the $M$ ligands has not been absorbed by time $t$ is $p^{(M)}_{0}(x_{1},\dots,x_{M},t)$, where
\begin{align*}
p^{(M)}_{0}:[0,L]^{M}\times[0,\infty)\to[0,1]
\quad\text{and}\quad
p^{(M)}_{1}:[0,L]^{M}\times[0,\infty)\to[0,1]
\end{align*}
satisfy
\begin{align*}
&\partial_t\begin{pmatrix}p^{(M)}_{0}\\p^{(M)}_{1}\end{pmatrix}=D\Delta\begin{pmatrix}p^{(M)}_{0}\\p^{(M)}_{1}\end{pmatrix}+Q\begin{pmatrix}p^{(M)}_{0}\\p^{(M)}_{1}\end{pmatrix},\quad (y_{1},\dots,y_{M})\in(0,L)^{M}\text{, }t>0,
\end{align*}
with some boundary conditions that couple to $p^{(M-1)}_{0}$, which solves a similar BVP on $[0,L]^{M-1}$ (see Theorem~\ref{theorem survival} for a precise statement). Theorems~\ref{theorem split} and \ref{theorem mfpt} give hierarchies of BVPs for other joint statistics, and Theorem~\ref{general m} gives similar hierarchies of BVPs for moments of the switching PDEs in Examples~\ref{neural example 1} and \ref{insect example}-\ref{robin example}.}

\end{example}

\section{Discussion}
When studying diffusion in a randomly switching environment, considering a density of diffusing particles leads to a switching PDE. In contrast, considering only finitely many diffusing particles leads to a switching SDE. In this paper we developed tools to calculate statistics for both of these types of processes and have shown how these tools reveal the dynamics of several biological examples. We have also established a connection between these two perspectives on diffusion in a random environment. In particular, moments of switching PDEs correspond to exit statistics of multiple diffusing particles. A number of forthcoming papers on diverse subjects depend on the tools developed in this paper, and we further anticipate that more models involving switching PDEs and SDEs will arise and make use of our results.

\setcounter{equation}{0}
\renewcommand{\theequation}{A.\arabic{equation}}
\section*{Appendix}
In this appendix, we discuss verifying that the examples in section~\ref{PDE examples} satisfy the necessary hypotheses. First, we consider Examples~\ref{neural example 1} and \ref{insect example} together. For both of these examples, the existence of such a process satisfying Assumptions~\ref{continuous space}-\ref{boundary assumption} follows immediately from the regularity of solutions to the one-dimensional heat equation on finite intervals (for a detailed construction of such a process, see \cite{Lawley15sima}). The existence of the bound in Assumption~\ref{C bound} follows from standard estimates on solutions to the heat equation (for example, Theorem 9 on page 61 of \cite{Evans98} combined with the maximum principle gives such a bound).

Assumption~\ref{neumann bound} is verified by analyzing the spectral decompositions of the associated solution operators. For concreteness, consider Example~\ref{neural example 1}. Fix a time $t>0$ and let $\sigma\ge0$ denote the amount of time since the last switch (known in renewal theory as the age). Then, $u(x,t)$ can be written as
\begin{align}\label{any time}
\begin{split}
u(x,t)&=(1-J(t))e^{A_{q}\sigma}u(x,t-\sigma)\\
&\quad+J(t)(e^{A_{f}\sigma}(u(x,t-\sigma)-h) + \int_{0}^{\sigma}e^{A_{f}(\sigma-s)}Dh_{xx}\,ds+h),
\end{split}
\end{align}
where $e^{A_{q}t}$ and $e^{A_{f}t}$ are the $C_{0}$-semigroups  generated by the self-adjoint operators
\begin{align*}
A_{q}u & := \Delta u\quad\text{if }u\in D(A_{q})  := \Big\{\phi\in H^{2}(0,L):\frac{\partial\phi}{\partial x}(0)=0=\phi(L)\Big\},\\
A_{f}u & := \Delta u\quad\text{if }u\in D(A_{f})  := \Big\{\phi\in H^{2}(0,L):\frac{\partial\phi}{\partial x}(0)=0=\frac{\partial\phi}{\partial x}(L)\Big\},
\end{align*}
and $h(x):=\frac{2L}{\pi}\delta[1-\cos(\frac{\pi}{2L}x)]$.

If we let $\{-\alpha_{k}\}_{k\ge1}$ and $\{a_{k}\}_{k\ge1}$ denote the eigenvalues and eigenvectors of $A_{q}$, then we have the almost sure bound
\begin{align*}
\big\|\frac{d}{dx}e^{A_{q}\sigma}u(x,t-\sigma)\big\|_{\infty}
\le\big\|a_{1}\big\|_{\infty}\sum_{k=1}^{\infty}e^{-\alpha_{k}\sigma}|\<a_{k},u(x,t-\sigma)\>|\sqrt{\alpha_{k}/D},
\end{align*}where $\<\cdot,\cdot\>$ denotes the $L^{2}[0,L]$ inner product and $\|\cdot\|_{\infty}$ denotes the $L^{\infty}[0,L]$ norm. Using the eigendecompositions of $A_{q}$ and $A_{f}$, it is straightforward to show that
\begin{align*}
\E e^{-\alpha_{k}\sigma}|\<a_{k},u(x,t-\sigma)\>|\sim1/k^{3}\quad\text{as }k\to\infty.
\end{align*}
A similar argument shows that there exists a random variable with finite expectation that almost surely bounds the second term in (\ref{any time}). Verifying Assumption~\ref{neumann bound} for Example~\ref{insect example} is similar.

Moving to Example~\ref{thermostat}, checking Assumptions~\ref{continuous space}-\ref{boundary assumption} is the same as the examples above. Trying to verify Assumptions~\ref{C bound} and \ref{neumann bound} is more difficult since the boundary conditions are non-local and the operators involved are not self-adjoint. We currently do not know how to verify these assumptions, but we remark that the conclusions of Theorem~\ref{general m} are in complete agreement with Monte Carlo simulations (see Figure~\ref{figure thermo}).

Example~\ref{robin example} requires only a slight generalization of Theorem~\ref{general m}. The PDE in~(\ref{interface equation}) follows from exactly the same argument used in Theorem~\ref{general m} to exchange differentiation with expectation (the necessary bound in Assumption~\ref{C bound} is obtained again by Theorem 9 on page 61 of \cite{Evans98} combined with the maximum principle). The boundary conditions in~(\ref{interface boundary}) are immediate. The interface conditions in (\ref{interface interface}) follow from exchanging limits with expectation, the necessary bounds coming from an argument similar to the one used above to verify Assumption~\ref{neumann bound} for Example~\ref{neural example 1}.

\bibliography{ckbiblio}
\bibliographystyle{siam}
\end{document}